\numberwithin{equation}{section}
\newtheorem{theorem}{Theorem}[section]
\newtheorem{lemma}{Lemma}[section]
\newtheorem{corollary}{Corollary}[section]
\newtheorem*{remark}{Remark}
\newcommand{\BS}{\boldsymbol}
\newcommand{\rmnum}[1]{\romannumeral #1}
\newcommand{\Rmnum}[1]{\expandafter\@slowromancap\romannumeral #1@}
\newcommand{\trp}{{\sf T}}
\journal{}
\def\ps@pprintTitle{%
   \let\@oddhead\@empty
   \let\@evenhead\@empty
   \def\@oddfoot{\reset@font\hfil\thepage\hfil}
   \let\@evenfoot\@oddfoot
}
\begin{document}

\begin{frontmatter}
\author[a]{ Osama Idais\corref{cor1}}
\ead{osama.idais@ovgu.de}
\address[a]{\small Institute for Mathematical Stochastics, Otto-von-Guericke University Magdeburg, \\ \small PF 4120,
39016 Magdeburg, Germany }

 \title{Locally optimal designs for generalized linear models \\ within the family of  Kiefer $\Phi_k$-criteria}

\begin{abstract}
Locally optimal designs for generalized linear models are derived at certain values of the regression parameters. In the present paper a general setup of the generalized linear model is considered.  Analytic solutions for optimal designs are developed under Kiefer  $\Phi_k$-criteria highlighting  the D- and A-optimal designs. By means of The General Equivalence Theorem necessary and sufficient conditions in term of intensity values are obtained to characterize the locally optimal designs. In this context, linear predictors  are assumed constituting first order models with and  without intercept on appropriate experimental regions.
\end{abstract}

\begin{keyword}
generalized linear model\sep approximate design\sep The General Equivalence Theorem\sep  intercept term \sep locally optimal design \sep analytic solution.

\end{keyword}

\end{frontmatter}
\section{Introduction}
\label{}
The generalized linear model (GLM) was developed by  \citet{10.2307/2344614}. It is viewed as a generalization of the ordinary linear regression which allows continuous or discrete observations from  one-parameter exponential family distributions to be combined with explanatory variables (factors) via proper link functions. Therefore,  wide applications  can be addressed by  GLMs such as social and educational sciences, clinical trials, insurance, industry. In particular;  logistic and probit models are used for binary observations whereas  Poisson models and gamma models are used  for count and nonnegative continuous observations, receptively (\citet{10.2307/2333860}, \citet{doi:10.1080/00224065.1997.11979769}, \citet{fox2015applied}, \citet{goldburd2016generalized}).  Methods of likelihood are utilized to obtain the  estimates of the model parameters. The precision of these maximum likelihood estimates (MLEs) is measured by their variance-covaraince matrix. In ordinary regression models for which  normality assumption is realized  the variance-covariance matrix is exactly (proportional to) the inverse of the Fisher information matrix. In contrast, for the GLMs  the observations are often non-normal, and therefore  large sample theory is demanded for the statistical inference. In this context, the variance-covariance matrix is approximately  the inverse of the Fisher information matrix. It should, however, be emphasized that the Fisher information matrix for GLMs  depends on the model parameters. The theory of generalized linear models is presented carefully in \citet{mccullagh1989generalized} and  \citet{dobson2018introduction}. \par
While deriving optimal designs is obtained by minimizing the variance-covariance matrix there is no loss of generality to concentrate on maximizing the Fisher information matrix. For generalized linear models the optimal design cannot be found without a prior knowledge of the parameters (\citet{khuri2006}, \citet{atkinson2015designs}). One approach which so-called local optimality was proposed by \citet{chernoff1953} aiming at  deriving a locally optimal design at a given parameter value (best guess). This approach  is widely employed for GLMs, for instance; for count data with Poisson models and Rasch Poisson model see  \citet{WANG20062831}, \citet{10.2307/24308852} and Graßhoff, Holling, and Schwabe (\citeyear{10.1007/978-3-319-00218-7_14}, \citeyear{10.1007/978-3-319-13881-7_9}, \citeyear{2018arXiv181003893G}). For binary data:  see  \citet{10.2307/2287114} and \citet{MATHEW2001295} under logistic models and \citet{biedermann2006optimal} under dose-response models whereas under logit, log-log and probit models see \citet{10.2307/24310039}. Furthermore, \citet{GAFFKE2019} provided locally D- and A-optimal designs for gamma models.  In particular, optimal designs for GLMs without intercept have not been considered carefully. \citet{doi:10.1080/02331888.2014.937342} provided analytic proofs of  D-optimal designs for zero intercept parameters of a two-binary-factor logistic model with no interaction. Recently, \citet{2019arXiv190409232I} introduced locally D- and A-optimal designs for gamma models without intercept.\par

Locally optimal designs for a general setup of generalized linear models received some attention.  Geometrically,  \citet{10.2307/2346142} considered only one continuous factor. \citet{ATKINSON1996437} presented a study of optimal designs for nonlinear model including GLMs. \citet{YANG2008624} provided optimal designs for GLMs with applications to logistic and probit models. Also a general solution for GLMs was given in  \citet{yang2009}.  Analytic solutions under D-criterion were obtained by \citet{tong2014} for particular limitations. \par    

The paper is organized as follows. In Section \ref{sec3-1} we present some approaches to determine the optimal weights for particular designs under D-, A- and $\Phi_k$-criteria which will be used in the subsequent sections. Throughout, with the aid of The Equivalence Theorem  we establish a necessary and sufficient condition for a design  to be locally D-,  A- or $\Phi_k$-optimal designs. We begin with the single-factor model by Section \ref{sec3-2}. In  Section \ref{sec3-3} we consider  first order models with intercept.  In Section \ref{sec3-5} we focus on Kiefer $\Phi_k$-criteria for first order models without intercept.
\section{Preliminary}\label{sec-2}

In the following subsections we introduce  the GLMs and the required notations of optimal design theory. 

\subsection{Model specification} \label{subsec2-1}
In the context of the generalized linear models the observations (responses) belong to a  one-parameter exponential family. The probability density function of a response variable $Y$ defined as
\begin{equation}
p(Y;\theta,\phi)=\exp\Big(\frac{Y\theta-b(\theta)}{a(\phi)}+c(\phi,Y)\Big), \label{eq-exp}
\end{equation}
where $a(\cdot),b(\cdot)$ and $c(\cdot)$ are known functions whereas $\theta$ is a canonical parameter  and $\phi$ is a dispersion parameter. 
 A common computational method for fitting the models to data are provided in the GLM framework. That is the expected mean is given by  $E(Y)=\mu=b^\prime(\theta)$, and the variance is given by  $\mathrm{var}(Y)=a(\phi)b^{\prime\prime}(\theta)$. The quantity $b^{\prime\prime}(\theta)$ is called the mean-variance function or equivalently, the variance function of the expected  mean, i.e., $V(\mu)=b^{\prime\prime}(\theta)$. Thus we may write $\mathrm{var}(Y)=a(\phi)V(\mu)$ which depends on the values of $\BS{x}$  (see \citet{mccullagh1989generalized}, Section 2.2.2).\par
Consider the experimental region ${\cal X}\subseteq  \mathbb{R}^{\nu}, \nu\ge1,$ to which the covariate value $\BS{x}$  belongs. Denote by $\BS{\beta}\in \mathbb{R}^{p}$ the parameter vector. Let $\BS{f}(\BS{x}):{\cal X}\rightarrow \mathbb{R}^{p}$ be a $p$-dimensional regression function, i.e.,  $\BS{f}(\BS{x})=(f_{1},\dots, f_{p})^\trp$ where the components $f_{1},\dots, f_{p}$ are real-valued continuous linearly independent functions. The generalized linear model can be introduced as  
 \begin{equation}
\eta=g(\mu)\,\,  \mbox{ where } \eta=\BS{f}^\trp(\BS{x})\BS{\beta}, \label{eq2.2}
\end{equation}
where  $g$  is a link function that relates the expected mean $\mu$ to the linear predictor $\BS{f}^\trp(\BS{x})\BS{\beta}$. It is assumed that $g$ is one-to-one and differentiable. 
One can realize that $\mu=\mu(\BS{x},\BS{\beta})=g^{-1}\big(\BS{f}^\trp(\BS{x})\BS{\beta}\big)$ and $\mathrm{d} \eta/\mathrm{d}\mu=g^{\prime}\bigl(g^{-1}\big(\BS{f}^\trp(\BS{x})\BS{\beta}\big) \bigr)$ and therefore, we can define the  intensity function at a point $\BS{x} \in \mathcal{X}$ as
\begin{equation}
 u(\BS{x},\BS{\beta})=\Bigl(\mathrm{var}(Y)\,\Big(\frac{{\rm d}\eta}{{\rm d}\mu}\Big)^2\Bigr)^{-1} \label{eq2.3}
  \end{equation}
 which is positive and depends on the value of linear predictor $\BS{f}^\trp(\BS{x})\BS{\beta}$. The intensity function is regarded as the weight for the corresponding unit at the point $\BS{x}$  (\citet{atkinson2015designs}).\par
The Fisher information matrix for a GLM at $\BS{x}\in \mathcal{X}$ (see \citet{fedorov2013optimal}, Subsection 1.3.2) has the form
\begin{equation}
 \BS{M}(\BS{x},\BS{\beta})=u(\BS{x},\BS{\beta})\,\BS{f}(\BS{x})\,\BS{f}^\trp(\BS{x}).  \label{eq2.4}
 \end{equation}
For the whole experimental points $\BS{x}_1, \dots, \BS{x}_n$ the Fisher information matrix  reads as
\begin{equation}
\BS{M}(\BS{x}_1,\dots,\BS{x}_n,\BS{\beta})=\sum_{i=1}^{n}\BS{M}(\BS{x}_i,\BS{\beta}). \label{eq-info}
\end{equation}
The information matrix of the form (\ref{eq2.4}) is appropriate for other nonlinear models, e.g., The survival times observations which depend on the proportional hazard model (\citet{schmidt2017optimal}). Moreover, under homoscedastic regression models the intensity function is constant equal to $1$ whereas, under heteroscedastic regression models we get intensity that is equal to $1/\mathrm{var}(Y)$ which depends on $\BS{x}$ only and thus  we have information matrix of form $ \BS{M}(\BS{x})=u(\BS{x})\,\BS{f}(\BS{x})\,\BS{f}^\trp(\BS{x})$ that does not depend on the model parameters. The latter case was discussed  in  \citet{GRAHOFF20073882} and in the book by  \citet{fedorov2013optimal}, p.13.  

It is worthwhile mentioning that unlike the normally-distributed response variables, the sampling distributions for MLEs\, $\BS{\hat\beta}$ in GLMs that used for inference cannot be determined exactly. Therefore,  the statistical inferences for GLMs are conducted for large sample sizes under mild regularity assumptions on the probability density (\ref{eq-exp}). Hence,  
\[
\sqrt{n}(\BS{\hat\beta}_{n}-\BS{\beta}) \xrightarrow[]{\text{d}}
 \mathcal{N}_{p}\big(\BS{0},\BS{M}^{-1}\big)
 \]
where $\BS{M}=\lim\limits_{n \to \infty}\frac{1}{n}\BS{M}(\BS{x}_1,\dots,\BS{x}_n,\BS{\beta})$ (\citet{fahrmeir1985}, Theorem 3). 
Moreover,  the variance-covariance matrix of $\BS{\hat\beta}$ is approximately given by  the inverse of the Fisher information matrix (\ref{eq-info}), see \citet{fedorov2013optimal}, Section 1.5.,  
\begin{equation}
\mathrm{var}(\BS{\hat\beta})\approx\BS{M}^{-1}(\BS{x}_1,\dots,\BS{x}_n,\BS{\beta}). \label{eq-var}
\end{equation}
\subsection{Optimal designs} \label{subsec2-2}
Throughout the present work we will deal with the approximate (continuous) design theory, i.e., a design $\xi$ is a probability measure with finite support on 
the experimental region ${\cal X}$, 
\begin{equation}
\xi=\left( \begin{array}{cccc}   \BS{x}_1 &\BS{x}_2&\dots&\BS{x}_r  \\  
 \omega_1 & \omega_2 &\dots&\omega_r \end{array}\right), \label{eq2-5}
\end{equation}
where $r\in\mathbb{N}$, $\BS{x}_1,\BS{x}_2, \dots,\BS{x}_r\in\mathcal{X}$ are pairwise distinct points  
and $\omega_1, \omega_2, \dots, \omega_r>0$ with $\sum_{i=1}^{r} \omega_i=1$. 
The set ${\rm supp}(\xi)=\{\BS{x}_1,\BS{x}_2, \dots,\BS{x}_r\}$ is called the support of $\xi$ and 
$\omega_1,\ldots,\omega_r$ are called the weights of $\xi$,
see \citet{silvey1980optimal}, p.15.  
The information matrix of a design $\xi$ from (\ref{eq2-5}) at a parameter 
point $\BS{\beta}$ is defined by
\begin{equation}
\BS{M}(\xi, \BS{\beta})=\int_{\mathcal{X}} \BS{M}(\BS{x}, \BS{\beta})\, \xi(\mathrm{d} \BS{x})=  \sum_{i=1}^{r}\omega_i \BS{M}(\BS{x}_i, \BS{\beta}).\label{eq2-6}
\end{equation}
One might recognize $\BS{M}(\xi, \BS{\beta})$ as a convex combination of all information matrices for all design points of $\xi$. Another representation of the information matrix (\ref{eq2-6}) can be utilized  based on  the $r \times p$ design matrix $\BS{F}=[\BS{f}(\BS{x}_1),\dots,\BS{f}(\BS{x}_r)]^\trp$ and the $r\times r$ weight matrix $\BS{V}=\mathrm{diag}(\omega_iu(\BS{x}_i,\BS{\beta}))_{i=1}^{r}$ and hence we can write 
\[
\BS{M}(\xi, \BS{\beta})=\BS{F}^\trp\BS{V}\BS{F}.
\]

\begin{remark} A particular type of designs appears frequently when the support size equals the dimension of $\BS{f}$, i.e.,  $r=p$. In such a case the design is minimally supported and it is often called a minimal-support or a saturated design.  
\end{remark}
In this paper we focus on  optimal designs within the family of Kiefer $\Phi_k$-criteria (\cite{doi:10.1093biomet62.2.277}).  Kiefer $\Phi_k$-criteria aim at minimizing the $k$-norm of the eigenvalues of the variance-covariance matrix and  include the most common criteria D-, A- and E- optimality.  Denote by $\lambda_i(\xi,\BS{\beta})\, (1\le i \le p)$ the eigenvalues  of a nonsingular information matrix $\BS{M}(\xi,\BS{\beta})$. Denote by ``$\det$'' and ``${\rm tr}$'' the determinant and the trace of a matrix, respectively. The Kiefer $\Phi_k$-criteria are defined by 
\begin{align*}
\Phi_k(\xi,\BS{\beta})&=\Big(\frac{1}{p}\mathrm{tr}\big( \BS{M}^{-k}(\xi,\BS{\beta}) \big)\Big)^{\frac{1}{k}}=\Big(\frac{1}{p}\sum_{i=1}^{p}\lambda_i^{-k}(\xi,\BS{\beta})\Big)^{\frac{1}{k}},\,\, 0< k<\infty,\\
\Phi_0(\xi,\BS{\beta})&=\lim_{k\to0+} \Phi_k(\xi,\BS{\beta})=\Big(\det(\BS{M}^{-1}(\xi,\BS{\beta}))\Big)^{\frac{1}{p}},\\
\Phi_\infty(\xi,\BS{\beta})&=\lim_{k\to\infty} \Phi_k(\xi,\BS{\beta})=\max_{1\le i \le p}\big(\lambda_i^{-1}(\xi,\BS{\beta})\big).
\end{align*}
Note that $\Phi_0(\xi,\BS{\beta})$, $\Phi_1(\xi,\BS{\beta})$ and $\Phi_\infty(\xi,\BS{\beta})$ are the D-, A- and E-criteria, respectively. A $\Phi_k$-optimal design $\xi^*$ minimizes  the function $\Phi_k(\xi,\BS{\beta})$ over all designs $\xi$ whose information matrix $\BS{M}(\xi,\BS{\beta})$ is nonsingular. For $0\le k<\infty$ the  strict convexity  of $\Phi_k(\xi,\BS{\beta})$ implies that the information matrix of a locally $\Phi$-optimal design (at $\BS{\beta}$) is unique. That is, if
$\xi^*$ and $\xi^{**}$ are two locally $\Phi$-optimal designs (at $\BS{\beta}$) then $\BS{M}(\xi^*,\BS{\beta})=\BS{M}(\xi^{**},\BS{\beta})$ (\cite{doi:10.1093biomet62.2.277}).  In particular, D-optimal designs are constructed to minimize the determinant of the variance-covariance matrix of the estimates or  equivalently to maximize the determinant of the information matrix. The D-criterion is typically defined by the convex function  $\Phi_{\mathrm{D}}(\BS{M}(\xi, \BS{\beta}))=-\log \det \big(\BS{M}(\xi, \BS{\beta})\big)$.  Geometrically, the volume of the asymptotic  confidence ellipsoid  is inversely proportional to $\sqrt{\det \big(\BS{M}(\xi, \BS{\beta})\big)}$ where $\det \big(\BS{M}(\xi, \BS{\beta})\big)$ can be determined by the inverse of the product of the squared lengths of the axes. Therefore,  the D-optimal designs minimize the volume of the asymptotic confidence ellipsoid.\par
A-optimal designs are constructed to minimize the trace of the variance-covariance matrix of the estimates, i.e., to minimize the average variance of the estimates. The A-criterion is typically defined by $\Phi_{\mathrm{A}}\big(\BS{M}(\xi, \BS{\beta})\big)={\rm tr}\bigl(\BS{M}^{-1}(\xi, \BS{\beta})\bigr)$.  The A-criterion minimizes the sum of the squared lengths of the axes of the asymptotic  confidence ellipsoid. Moreover, E-optimal designs maximize the smallest eigenvalue of $\BS{M}(\xi, \BS{\beta})$ and equivalently, they minimize the squared length of the ‘largest’ axis of the  asymptotic  confidence ellipsoid. 
\par

In order to verify the  local optimality of  a design The General Equivalence Theorem is usually employed  (see \cite{silvey1980optimal}, p.54 and \cite{atkinson2007optimum}, p.137). It  provides necessary and sufficient conditions for a design to be optimal and thus the optimality of a suggested design can be easily verified or disproved. The most  generic  one is the celebrated  Kiefer-Wolfowitz equivalence theorem under  D-criterion (\citet{kiefer_wolfowitz_1960}).  The design  $\xi^*$ is $\Phi_k$-optimal if and only if 
\begin{align}
u(\BS{x},\BS{\beta})\BS{f}^\trp(\BS{x})\BS{M}^{-k-1}(\xi^*,\BS{\beta})\BS{f}(\BS{x})\le \mathrm{tr}(\BS{M}^{-k}(\xi^*,\BS{\beta}))\,\,\,\mbox{for all}\,\, \BS{x}\in \mathcal{X}. \label{eq-3.0}
\end{align}
Furthermore, if the design $\xi^*$ is  $\Phi_k$-optimal then inequality (\ref{eq-3.0}) becomes equality at its support.\par

\begin{remark}\label{rem2.3.33.} \ \\
 The left hand side of  condition (\ref{eq-3.0}) of The General Equivalence Theorem is called the sensitivity function.
\end{remark}
\section{Determination of locally optimal weights} \label{sec3-1}

In this section we provide the optimal weights of the designs that will be derived throughout the paper with respect to Kiefer $\Phi_k$-criteria, and in particular the A-criterion ($k=1$) and the D-criterion ($k=0$).   
In the current work we mostly deal with saturated designs (i.e., $r=p$) for generalized linear models.    Let the support points are given by $\BS{x}_1^*,\dots,\BS{x}_p^*$ such that $\BS{f}(\BS{x}_1^*),\dots, \BS{f}(\BS{x}_p^*)$ are linearly independent. \par

For the A-criterion ($k=1$) the optimal weights are given according to \citet{pukelsheim2006optimal}, Section 8.8, which has been modified in \citet{GAFFKE2019}. The design $\xi^*$ which achieves the minimum value of  \,${\rm tr}\bigl(\BS{M}^{-1}(\xi,\BS{\beta})\bigr)$ over all designs $\xi$ with ${{\rm supp}(\xi^*)=\{\BS{x}_1^*,\ldots,\BS{x}_p^*\}}$ is given by 
\[
\xi^*=\left(\begin{array}{ccc}\BS{x}_1^* & \ldots & \BS{x}_p^*\\ \omega_1^* & \ldots & \omega_p^*\end{array}\right),\ 
\mbox{ with }\ \omega_i^*=c^{-1}\Bigl(\frac{c_{ii}}{u_i}\Bigr)^{1/2}\ (1\le i\le p)\,,\ \
c=\sum\limits_{k=1}^p\Bigl(\frac{c_{kk}}{u_k}\Bigr)^{1/2},
\]
where $u_i=u(\BS{x}_i^*,\BS{\beta})$ ($1\le i\le p$) and $c_{ii}$ ($1\le i\le p$) are the diagonal entries of the matrix 
$\BS{C}=(\BS{F}^{-1})^\trp\BS{F}^{-1}$ and
$\BS{F}=\bigl[\BS{f}(\BS{x}_1^*),\ldots,\BS{f}(\BS{x}_p^*)\bigr]^\trp$.

For the D-criterion ($k=0$) the optimal weights are given by $\omega_i^*=1/p$ ($1\le i\le p$), see Lemma 5.3.1 of \citet{silvey1980optimal}. That is the locally D-optimal saturated design assigns equal weights to the support points.   On the other hand, there is no unified formulas for the optimal weights of a non-saturated design specifically, with respect to D-criterion. However, let the model be given with parameter vector $\BS{\beta}$ of dimension $p=3$, i.e., $\BS{\beta}\in \mathbb{R}^3$. The  next lemma  provides the optimal weights of a design with  four support points   $\xi^*=\{(\BS{x}_i^*,\omega_i^*), i=1,2,3,4 \}$  under certain conditions.   

 \begin{lemma}\label{lem3.0.2.}
Let $\BS{x}_1^*,\,\BS{x}_2^*,\,\BS{x}_3^*,\, \BS{x}_4^* \in \mathcal{X}$ be given such that the vectors $\BS{f}(\BS{x}_1^*)$,\,$\BS{f}(\BS{x}_2^*)$,\,$\BS{f}(\BS{x}_3^*)$,\,$\BS{f}(\BS{x}_4^*)$ are linearly independent. For a given parameter point  $\BS{\beta}$ let $u_{i}=u(\BS{x}_i^*,\BS{\beta})$\,for all \,$(1\le i \le 4)$. Denote 
\begin{align*}
&d_{1}=\det\big[\BS{f}(\BS{x}_{2}^*),\BS{f}(\BS{x}_{3}^*),\BS{f}(\BS{x}_{4}^*)\big],\,\,\, d_{2}=\det\big[\BS{f}(\BS{x}_{1}^*),\BS{f}(\BS{x}_{3}^*),\BS{f}(\BS{x}_{4}^*)\big], \\
&d_{3}=\det\big[\BS{f}(\BS{x}_{1}^*),\BS{f}(\BS{x}_{2}^*),\BS{f}(\BS{x}_{4}^*)\big], \,\,\,
 d_{4}=\det\big[\BS{f}(\BS{x}_{1}^*),\BS{f}(\BS{x}_{2}^*),\BS{f}(\BS{x}_{3}^*)\big]
 \end{align*}
such that $d_i\neq 0$\,\,for all \,$(1\le i\le 4)$. Assume that $u_2=u_3$ and $d_2^2=d_3^2$. Then the design $\xi^*$ which achieves the minimum value of  $-\log\det\bigl(\BS{M}(\xi,\BS{\beta})\bigr)$ over all designs $\xi$ with ${\rm supp}(\xi^*)=\{\BS{x}_1^*, \BS{x}_2^*, \BS{x}_3^*, \BS{x}_4^*\}$ is given by  $\xi^*=\{(\BS{x}_i^*,\omega_i^*),  i=1,2,3,4 \}$ where 
 \begin{align*}
\omega_{1}^*&=\frac{3}{8}+\frac{1}{4}\Big(1+\frac{d_{1}^2}{d_{4}^2}\frac{u_{1}}{u_{4}}-4\frac{d_{2}^2}{d_{4}^2}\frac{u_{1}}{u_{2}}\Big)^{-1},\\
\omega_{2}^*&=\omega_{3}^*=\frac{1}{2}\Big(4-\frac{d_{4}^2}{d_{2}^2}\frac{u_{2}}{u_{1}}-\frac{d_{1}^2}{d_{2}^2}\frac{u_{2}}{u_{4}}\Big)^{-1},\\
\omega_{4}^*&=\frac{3}{8}+\frac{1}{4}\Big(1+\frac{d_{4}^2}{d_{1}^2}\frac{u_{4}}{u_{1}}-4\frac{d_{2}^2}{d_{1}^2}\frac{u_{4}}{u_{2}}\Big)^{-1}.
 \end{align*}
 \end{lemma}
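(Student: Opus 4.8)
Because the support $\{\BS x_1^*,\dots,\BS x_4^*\}$ is fixed, the assertion amounts to the finite-dimensional problem of maximising $\det\bigl(\BS M(\xi,\BS\beta)\bigr)$ over the weight simplex $\Delta=\{(\omega_1,\dots,\omega_4):\omega_i\ge0,\ \sum_i\omega_i=1\}$. Writing $\BS F=[\BS f(\BS x_1^*),\dots,\BS f(\BS x_4^*)]^\trp$ (a $4\times3$ matrix) and $\BS V=\mathrm{diag}(\omega_iu_i)_{i=1}^4$, one has $\BS M(\xi,\BS\beta)=\BS F^\trp\BS V\BS F$, and the Cauchy--Binet formula gives the explicit cubic
\[
\det\bigl(\BS M(\xi,\BS\beta)\bigr)=\sum_{i=1}^{4}\Bigl(\prod_{j\ne i}u_j\Bigr)d_i^{\,2}\prod_{j\ne i}\omega_j\ =:\ \Psi(\omega_1,\dots,\omega_4),
\]
the four $3$-element row-subsets of $\BS F$ producing exactly the minors $d_1,\dots,d_4$. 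Two facts will organise the argument: (a)~$-\log\det\bigl(\BS M(\xi,\BS\beta)\bigr)$ is convex on $\Delta$, since $\BS M(\xi,\BS\beta)$ is affine in the weights and $\BS A\mapsto-\log\det\BS A$ is convex on positive definite matrices, and it equals $+\infty$ on those faces of $\Delta$ on which $\BS M$ is singular (at most two positive weights), because any three of the $\BS f(\BS x_i^*)$ are linearly independent; hence a stationary point of $-\log\det\bigl(\BS M(\xi,\BS\beta)\bigr)$ in the interior of $\Delta$ is a global minimiser. (b)~Under $u_2=u_3$ and $d_2^{\,2}=d_3^{\,2}$ the coefficients $(\prod_{j\ne2}u_j)d_2^{\,2}$ and $(\prod_{j\ne3}u_j)d_3^{\,2}$ coincide, so $\Psi$ is symmetric in $\omega_2,\omega_3$.

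\noindent By (b), a point $(s,t,t,w)$ that is stationary for $\Psi$ on the slice $\{\omega_2=\omega_3\}\cap\Delta$ is automatically stationary for $\Psi$ on $\Delta$; so I would look for the optimum in the form $\omega_1^*=s,\ \omega_2^*=\omega_3^*=t,\ \omega_4^*=w$ with $s+2t+w=1$. Putting $a_i=(\prod_{j\ne i}u_j)d_i^{\,2}$ (so $a_2=a_3$), on this slice $\Psi=a_1t^2w+2a_2stw+a_4st^2$, and I would write down the Lagrange conditions for its maximum subject to $s+2t+w=1$. Because $\Psi$ is homogeneous of degree three, Euler's identity pins the multiplier to $3\Psi$; after cancelling a common factor $t>0$, two of the three stationarity relations combine to the linear identity $2a_2(w-s)=(a_1-a_4)t$, while the third, together with $s+w=1-2t$, reduces to a single equation for the scalar $t$ alone. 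Solving this equation and keeping the root that renders $t,s,w$ positive gives $t=\omega_2^*=\omega_3^*$, and then $\omega_1^*=s$, $\omega_4^*=w$ follow from $s+w=1-2t$ and $2a_2(w-s)=(a_1-a_4)t$.

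\noindent The closing step is algebraic: substitute $a_i=(\prod_{j\ne i}u_j)d_i^{\,2}$ so that the ratios $a_i/a_j$ become products of the ratios $d_i^{\,2}/d_j^{\,2}$ and $u_i/u_j$, and rearrange the solution into the displayed formulas for $\omega_1^*$, $\omega_2^*=\omega_3^*$, $\omega_4^*$ (one can cross-check internal consistency with $\omega_1^*+2\omega_2^*+\omega_4^*=1$). Fact (a) then certifies that this interior stationary point is the minimiser of $-\log\det\bigl(\BS M(\xi,\BS\beta)\bigr)$ among all designs carried by $\{\BS x_1^*,\dots,\BS x_4^*\}$.

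\noindent The main obstacle I anticipate is this last computation: putting the reduced stationarity condition into a manageable form, selecting the admissible root, and performing the somewhat lengthy rearrangement into precisely the stated closed forms, while recording the conditions on the $d_i$ and $u_i$ under which all three weights are genuinely positive (so that the convexity argument in (a) applies).
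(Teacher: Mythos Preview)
Your proposal is correct and follows essentially the same route as the paper: Cauchy--Binet to express $\det\BS M$ as a cubic in the weights, the symmetry $u_2=u_3$, $d_2^2=d_3^2$ to force $\omega_2^*=\omega_3^*$, and then solving the stationarity conditions of the reduced two-variable problem. The paper substitutes $\omega_4=1-\omega_1-2\omega_2$ and solves $\partial\varphi/\partial\omega_1=\partial\varphi/\partial\omega_2=0$ directly, whereas you organise the same computation via Lagrange multipliers and Euler's identity; your added convexity argument in (a) supplies the global-optimality justification that the paper leaves implicit.
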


\begin{proof}  Let $\BS{f}_\ell=\BS{f}(\BS{x}_\ell^*)=\big(f_{\ell1},f_{\ell2},f_{\ell3}\big)^\trp\,\,(1\le \ell \le 4)$. The $4 \times 3$ design matrix is given by $ \BS{F}=\bigl[\BS{f}_1,\BS{f}_2,\BS{f}_3,\BS{f}_4\bigr]^\trp$. Denote  $\BS{V}={\rm diag}\bigl(\omega_\ell u_\ell\bigr)_{\ell=1}^{4}$. Then  $\BS{M}(\xi,\BS{\beta})=\BS{F}^\trp\BS{V}\BS{F}$ and by  the Cauchy-Binet formula the determinant of $\BS{M}(\xi,\BS{\beta})$ is given by  the function $\varphi(\omega_1,\omega_2,\omega_3,\omega_4)$ where
\begin{eqnarray}
\varphi(\omega_1,\omega_2,\omega_3,\omega_4)= \sum\limits_{ \substack{ 1\le i<j<k\le4\\ h\in\{1,
2,3,4\}\setminus \{i,j,k\}}} d_{h}^2u_iu_ju_k\,\omega_i\omega_j\omega_k.\label{eq3.1}
\end{eqnarray}
By assumptions $u_2=u_3$, $d_2^2=d_3^2$ the function $\varphi(\omega_1,\omega_2,\omega_3,\omega_4)$ is invariant w.r.t.  permuting $\omega_2$ and $\omega_3$, i.e., $\varphi(\omega_1,\omega_2,\omega_3,\omega_4)=\varphi(\omega_1,\omega_3,\omega_2,\omega_4)$ and thus  minimizing (\ref{eq3.1}) has similar solutions for $\omega_2$ and $\omega_3$. Thus we can write $\omega_4=1-\omega_{1}-2\omega_{2}\,$ then  (\ref{eq3.1}) reduces to
\[
\varphi(\omega_1,\omega_2)= \alpha_1 \omega_2^3+\alpha_2 \omega_2^2+\alpha_3 \omega_1^2 \omega_2+\alpha_4 \omega_2^2 \omega_1+\alpha_5 \omega_1 \omega_2,
\]
where $\alpha_1=-2\,\alpha_2=-2\,d_4^2\,u_2^2\,u_4$, $\alpha_3=-\alpha_5=-4\,d_2^2\,u_1\,u_2\,u_4$, $\alpha_4=u_2^2 \left(d_1^2\,u_1-d_4^2\,u_4\right)-4\,d_2^2\,u_1\, u_2\,u_4$.
 Thus we obtain the system of  two equations $\partial \varphi/\partial \omega_1=0$, ${\partial \varphi/\partial \omega_2=0}$. Straightforward  computations  show that the solution of the above system is  the optimal weights  $\omega_\ell^*\,\,(1\le \ell\le 4)$ presented by the lemma. Hence, these optimal weights  minimizing $\varphi(\omega_1,\omega_2)$. 
\end{proof}

Moreover, saturated designs under Kiefer  $\Phi_k$-criteria for a GLM without intercept are of our interest,  in specific,  under the first order model $\BS{f}(\BS{x})=(x_1,\dots,x_\nu)^\trp$ and a parameter vector $\BS{\beta}=(\beta_1,\dots,\beta_\nu)^\trp$.  Therefore, the choice of locally $\Phi_k$-optimal weights which yields the minimum value of $\Phi_k(\xi,\BS{\beta})$ over all saturated designs with the same support are  given by the next lemma.
 
\begin{lemma} \label{lem3.0.3.}
Consider a GLM without intercept with $\BS{f}(\BS{x})=(x_1,\dots,x_\nu)^\trp$ on the experimental region $\mathcal{X}$. Denote by $\BS{e}_i$\, for all\, $(1\le i \le \nu)$ the $\nu$-dimensional unit vectors. Let $\BS{x}_i^*=a_i\, \BS{e}_i,\,a_i>0$\, for all\,  $(1\le i \le \nu)$ be   design points  in $\mathcal{X}$ such that the vectors $\BS{f}(\BS{x}_1^*),\ldots,\BS{f}(\BS{x}_\nu^*)$ are linearly independent.   Let $\BS{\beta}=(\beta_1,\dots,\beta_\nu)^\trp$  be a given parameter point.  Let $u_i=u(\BS{x}_i^*,\BS{\beta})$\,for all\, $(1\le i \le \nu)$. For a given positive real vector $\BS{a}=(a_1,\dots,a_\nu)^\trp$ the design $\xi_{\BS{a}}^*$ which achieves the minimum value of $\Phi_k(\xi_{\BS{a}},\BS{\beta})$ over all designs $\xi_{\BS{a}}$ with ${\rm supp}(\xi_{\BS{a}}^*)=\{\BS{x}_1^*,\ldots,\BS{x}_\nu^*\}$ assigns weights 
\begin{equation*}
\omega_i^*=\frac{(a_i^2u_i)^\frac{-k}{k+1}}{\sum\limits_{i=1}^\nu (a_i^2u_i)^\frac{-k}{k+1}}\,\, (1\le i \le \nu)
\end{equation*}
 to the corresponding design points in  $\{\BS{x}_1^*,\dots,\BS{x}_\nu^*\}$.\\[3ex]
For D-optimality ($k=0$),\,\,$\omega_i^*=1/\nu$\,\, $(1\le i\le \nu)$.\\[2ex]
For A-optimality ($k=1$),\,\, $\omega_i^*=\frac{(a_i^2u_{i})^{-1/2}}{\sum\limits_{i=1}^{\nu}(a_i^2u_{i})^{-1/2}}$\,\, $(1\le i\le \nu)$.\\[2ex]
For E-optimality ($k\rightarrow \infty$),\,\, $\omega_i^*=\frac{(a_i^2u_{i})^{-1}}{\sum\limits_{i=1}^{\nu}(a_i^2u_{i})^{-1}}$\,\, $(1\le i\le \nu)$. 
\end{lemma}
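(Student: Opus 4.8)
The plan is to exploit the very special structure of the design matrix for the intercept‑free first order model at the prescribed support. Since $\BS{f}$ is the identity map and $\BS{x}_i^*=a_i\BS{e}_i$, we have $\BS{f}(\BS{x}_i^*)=a_i\BS{e}_i$, so the design matrix is $\BS{F}=[\BS{f}(\BS{x}_1^*),\dots,\BS{f}(\BS{x}_\nu^*)]^\trp=\mathrm{diag}(a_1,\dots,a_\nu)$, and with $\BS{V}=\mathrm{diag}(\omega_iu_i)_{i=1}^\nu$ one gets
\[
\BS{M}(\xi_{\BS{a}},\BS{\beta})=\BS{F}^\trp\BS{V}\BS{F}=\mathrm{diag}\bigl(a_i^2\omega_iu_i\bigr)_{i=1}^\nu .
\]
Thus the information matrix is diagonal, its eigenvalues are simply $\lambda_i=a_i^2\omega_iu_i$ $(1\le i\le\nu)$, all positive by the linear independence and positivity assumptions, and the whole problem reduces to a one‑line constrained optimization in the weights on the simplex $\{\omega_i>0:\sum_{i=1}^\nu\omega_i=1\}$.

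First I would treat $0<k<\infty$. Because $t\mapsto t^{1/k}$ is strictly increasing, minimizing $\Phi_k(\xi_{\BS{a}},\BS{\beta})$ is equivalent to minimizing $g(\BS{\omega})=\sum_{i=1}^\nu (a_i^2u_i)^{-k}\,\omega_i^{-k}$. Each summand is a strictly convex function of $\omega_i>0$, so $g$ is strictly convex on the open simplex and any stationary point of the Lagrangian is the unique global minimizer. Setting $\partial/\partial\omega_i\bigl(g(\BS{\omega})-\lambda\sum_{j}\omega_j\bigr)=0$ yields that $(a_i^2u_i)^{-k}\omega_i^{-k-1}$ is independent of $i$, hence $\omega_i\propto (a_i^2u_i)^{-k/(k+1)}$; normalizing by $\sum_{i}\omega_i=1$ gives exactly the stated $\omega_i^*$. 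The A‑optimal case is just the substitution $k=1$ in this formula.

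For D‑optimality ($k=0$) I would argue directly: $\Phi_0(\xi_{\BS{a}},\BS{\beta})=\bigl(\prod_{i}a_i^2\omega_iu_i\bigr)^{-1/\nu}$ is minimized by maximizing $\prod_{i}\omega_i$ on the simplex, and by the AM--GM inequality this forces $\omega_i^*=1/\nu$ (equivalently, let $k\to0+$ in the general formula). For E‑optimality ($k\to\infty$), $\Phi_\infty(\xi_{\BS{a}},\BS{\beta})=\bigl(\min_{i}a_i^2\omega_iu_i\bigr)^{-1}$, so one maximizes $\min_{i}a_i^2\omega_iu_i$ over the simplex; if the minimum were attained on a proper subset of the indices one could transfer a small amount of weight from an off‑set index onto that subset and strictly increase the minimum, so at the optimum all $a_i^2\omega_iu_i$ coincide, which gives $\omega_i^*\propto (a_i^2u_i)^{-1}$ (equivalently, let $k\to\infty$).

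The only points needing a little care are the justification that the interior Lagrange stationary point is the global minimum — handled by the strict convexity of $g$ — and the recovery of the limiting criteria $k=0$ and $k=\infty$, which I would obtain either by the direct AM--GM and max--min arguments above or by noting the continuity of $\omega_i^*$ in $k$. Neither of these is a genuine obstacle; the substantive step is simply the observation that the chosen support diagonalizes the information matrix, after which everything is an elementary constrained optimization.
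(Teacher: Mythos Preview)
Your proposal is correct and follows essentially the same approach as the paper: observe that the chosen support makes the information matrix diagonal, $\BS{M}(\xi_{\BS{a}},\BS{\beta})=\mathrm{diag}(a_i^2u_i\omega_i)$, and then carry out elementary calculus on the simplex to find the optimal weights. The paper eliminates $\omega_\nu$ and differentiates directly rather than using a Lagrange multiplier, and it does not spell out the convexity argument or the separate treatments of $k=0$ and $k\to\infty$ that you include, but these are minor presentational differences rather than a different route.
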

 
\begin{proof}
Define  the $\nu\times \nu$ design matrix $\BS{F}=\mathrm{diag}(a_i)_{i=1}^{\nu}$ with the $\nu \times \nu$ weight matrix $\BS{V}=\mathrm{diag}(u_{i}\omega_i)_{i=1}^\nu$. Then we have $
\BS{M}\bigl(\xi_{\BS{a}}, \BS{\beta}\bigr)=\BS{F}^\trp \BS{V}\BS{F}=\mathrm{diag}(a_i^2u_{i}\omega_i)_{i=1}^\nu$ and $\BS{M}^{-k}\bigl(\xi_{\BS{a}}, \BS{\beta}\bigr)=\mathrm{diag}\big((a_i^2u_{i}\omega_i)^{-k}\big)_{i=1}^\nu$ with $\mathrm{tr}\big(\BS{M}^{-k}(\xi_{\BS{a}}, \BS{\beta})\big) =\sum\limits_{i=1}^{\nu}(a_i^2u_{i}\omega_i)^{-k}$.  Note that the  eigenvalues of $\BS{M}^{-k}\bigl(\xi_{\BS{a}}, \BS{\beta}\bigr)$ are given by   $\lambda_i(\xi_{\BS{a}},\BS{\beta})=(a_i^2u_{i}\omega_i)^{-k}$\,\,  $(1\le i \le \nu)$. Thus  the Kiefer $\Phi_k$-criteria can be defined as
\begin{equation}
\Phi_k(\xi_{\BS{a}},\BS{\beta})=\Big(\frac{1}{\nu}\sum\limits_{i=1}^{\nu}(a_i^2u_{i}\omega_i)^{-k}\Big)^{\frac{1}{k}}\,\,\, (0 <k <\infty). \label{eq-3}
\end{equation}
 Now we aim at minimizing $\Phi_k(\xi_{\BS{a}},\BS{\beta})$ such that $\omega_i>0$ and $\sum\limits_{i=1}^{\nu}\omega_i=1$. We write  $\omega_\nu=1-\sum\limits_{i=1}^{\nu-1}\omega_i$ then (\ref{eq-3}) becomes
\begin{equation*}
\Phi_k(\xi_{\BS{a}},\BS{\beta})=\frac{1}{\nu^{1/k}}\Big((a_\nu^{2}u_{\nu})^{-k}(1-\sum\limits_{i=1}^{\nu-1}\omega_i)^{-k}+\sum\limits_{i=1}^{\nu-1}(a_i^2u_{i}\omega_i)^{-k}\Big)^{\frac{1}{k}}. 
\end{equation*}
It is straightforward to see that the  equation  $\frac{\partial \Phi_k(\xi_{\BS{a}},\BS{\beta})}{\partial \omega_i}=0$ is equivalent to 
 \begin{equation*}
 (a_i^2u_i)^{k}\omega^{k+1}_i-(a_\nu^2u_\nu)^{k} (1-\sum\limits_{i=1}^{\nu-1}\omega_i)^{k+1}=0
 \end{equation*}
 which gives  $\omega_i=\Big(a_\nu^2u_\nu/(a_i^2u_i)\Big)^{\frac{k}{k+1}}\omega_\nu$\, ${(1\le i\le \nu-1)}$, thus $\omega_i\, (a_i^2u_{i})^\frac{k}{k+1}=\omega_\nu$\,$(a_\nu^2 u_{\nu})^\frac{k}{k+1}$ ${(1\le i\le \nu-1)}$. This means  $\omega_i\, (a_i^2u_{i})^\frac{k}{k+1}$\,\, ${(1\le i\le \nu)}$ are all equal, i.e.,  $\omega_i\, (a_i^2u_{i})^\frac{k}{k+1}=c$ $(1\le i\le \nu)$, where $c>0$. It implies that  $\omega_i=c\,(a_i^2u_{i})^\frac{-k}{k+1}\,(1\le i\le \nu)$. Due to $\sum\limits_{i=1}^{\nu}\omega_i=1$ we get ${\sum\limits_{i=1}^{\nu}c\,(a_i^2u_{i})^\frac{-k}{k+1}=c\sum\limits_{i=1}^{\nu}(a_i^2u_{i})^\frac{-k}{k+1}=1}$, and thus $c=\bigl(\sum\limits_{i=1}^{\nu}(a_i^2u_{i})^\frac{-k}{k+1}\bigr)^{-1}$. So we finally obtain $\omega_i=(a_i^2u_{i})^\frac{-k}{k+1}/\bigl(\sum\limits_{i=1}^{\nu}(a_i^2u_{i})^\frac{-k}{k+1}\bigr)$\,for all \,$(1\le i\le \nu)$ which are the optimal weights given by the lemma.
\end{proof}
\section{Single-factor model} \label{sec3-2}
In this section we concentrate on the simplest case for which the model is composed by a single factor through the linear predictor 
\[
\eta(x,\BS{\beta})=\BS{f}^\trp(x)\BS{\beta}=\beta_0+\beta_1x\,\,\,\,\mbox{where}\,\,\,\, x \in \mathcal{X}. 
\] 
Let the experimental region is taken to be the continues unit interval $\mathcal{X}=[0,1]$.  We introduce the function 
\begin{equation*}
g(x)=\frac{c}{u(x,\BS{\beta})}
\end{equation*}
with constant $c$.  The function $g(x)$ will be utilized  for the characterization of the optimal designs. Consider the following conditions:\\
($\rmnum{1}$) $u(x,\BS{\beta})$ is positive and twice continuously differentiable.\\
($\rmnum{2}$)   $u(x,\BS{\beta})$ is strictly increasing on $\mathbb{R}$.\\
($\rmnum{3}$) $g^{\prime\prime}(x)$ is  an injective (one-to-one) function.\\
Recently,  Lemma 1 in \Citet{soton346869} showed that under the above conditions  ($\rmnum{1}$)-($\rmnum{3}$) with $g(x)=2/u(x,\BS{\beta})$  a locally D-optimal design on $[0,1]$ is only supported by two points $a$ and $b$ where ${0\le a< b\le 1}$.  In what follows analogous result is presented  for locally optimal designs under various optimality criteria.

\begin{lemma}\label{lem3.1.1.}
Consider model $\BS{f}(x)=(1,x)^\trp$ and experimental region $\mathcal{X}=[0,1]$. Let a parameter point $\BS{\beta}=(\beta_0,\beta)^\trp$ be given. Let conditions ($\rmnum{1}$)-($\rmnum{3}$) be satisfied. Denote by $\BS{A}$ a positive definite matrix and let $c$ be constant. Then if the condition of The General Equivalence Theorem is of the form 
\[
u(x,\BS{\beta})\BS{f}^\trp(x)\BS{A}\BS{f}(x)\le c 
\]
then the support points of a  locally optimal design $\xi^*$ is concentrated on exactly two points $a$ and $b$ where  $0\le a< b\le 1$. 
\end{lemma}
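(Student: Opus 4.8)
The plan is to rephrase the General Equivalence Theorem inequality so that it becomes the statement that a smooth function dominates a quadratic polynomial, and then to bound the number of contact points between them by a Rolle-type argument that uses condition (iii). Concretely, I would write $\BS{A}=(a_{ij})_{1\le i,j\le 2}$ and $q(x):=\BS{f}^\trp(x)\BS{A}\BS{f}(x)=a_{11}+2a_{12}x+a_{22}x^2$. Since $\BS{A}$ is positive definite and $\BS{f}(x)=(1,x)^\trp\neq\BS{0}$, one has $q(x)>0$ for all $x$, and $q$ has degree at most two, so $q''$ is the constant $2a_{22}$. The constant $c$ is necessarily positive (otherwise $u(x,\BS{\beta})q(x)\le c$ is infeasible), so, using $u(x,\BS{\beta})>0$ from (i), the stated inequality $u(x,\BS{\beta})q(x)\le c$ is equivalent to $q(x)\le c/u(x,\BS{\beta})=g(x)$. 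Setting $R:=g-q$, the Equivalence Theorem reads $R(x)\ge 0$ on $[0,1]$, with $R$ vanishing at every support point of $\xi^*$ (its equality part). Hence it suffices to show that $R$ has at most two zeros in $[0,1]$: the information matrix of a locally optimal design is nonsingular, and for $\BS{f}(x)=(1,x)^\trp$ this forces at least two distinct support points, so combining ``$R$ has at most two zeros'' with ``$\xi^*$ has at least two support points'' yields that $\xi^*$ is supported on exactly two points $a<b$ with $0\le a<b\le 1$.

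Next I would note that by (i) we have $R\in C^2[0,1]$ with $R''(x)=g''(x)-2a_{22}$, so the injectivity of $g''$ postulated in (iii) implies that $R''$ has at most one zero in $[0,1]$. Suppose, for contradiction, that $R$ had three distinct zeros $z_1<z_2<z_3$ in $[0,1]$. Since $z_1\ge 0$ and $z_3\le 1$, the middle zero $z_2$ lies in the open interval $(0,1)$, and because $R\ge 0$ on $[0,1]$ it is a local minimum of $R$, whence $R'(z_2)=0$. Rolle's theorem applied to $R$ on $[z_1,z_2]$ and on $[z_2,z_3]$ yields points $\zeta_1\in(z_1,z_2)$ and $\zeta_2\in(z_2,z_3)$ with $R'(\zeta_1)=R'(\zeta_2)=0$; since $\zeta_1<z_2<\zeta_2$, these are three distinct zeros of $R'$. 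Rolle's theorem applied to $R'$ then produces at least two distinct zeros of $R''$, contradicting the preceding sentence. (If $R$ had infinitely many zeros in $[0,1]$, or were identically zero on a subinterval, it would a fortiori have three distinct zeros, so the same contradiction applies.) Therefore $R$ has at most two zeros in $[0,1]$, and the conclusion follows as explained above.

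The substantive part is the zero-counting in the second paragraph; within it, the only point requiring care is the book-keeping that the critical points supplied by Rolle's theorem are genuinely distinct from one another and from $z_2$, regardless of whether $z_1,z_3$ are interior points or the endpoints $0,1$ (the key observation being that the middle zero is always interior). This is elementary, so I expect no real obstacle beyond that routine check. Conditions (i) and (iii) are exactly what make the scheme work: (i) supplies the smoothness needed for $R\in C^2$ and for the Rolle chain, while (iii) caps the number of zeros of $R''$ at one, which is the mechanism that limits the support to two points.
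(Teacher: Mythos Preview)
Your proposal is correct and follows essentially the same route as the paper: rewrite the equivalence inequality as $p(x)\le g(x)$ with $p$ a quadratic and $g=c/u$, then bound the number of contact points. The paper's own proof stops at that reformulation and defers the zero-counting to Lemma~1 of \citet{soton346869}; your Rolle chain (interior middle zero $\Rightarrow$ three zeros of $R'$ $\Rightarrow$ two zeros of $R''$, contradicting injectivity of $g''$) supplies exactly those omitted details, and your observation that only conditions~(i) and~(iii) are actually used is accurate.
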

\begin{proof}
Let $\BS{A}=[a_{ij}]_{i,j=1,2}$. Then let $p(x)=\BS{f}^\trp(x)\BS{A}\BS{f}(x)=a_{22}x^2+2a_{12}x+a_{11}$ which is  a polynomial  in $x$ of degree 2 where $x\in \mathcal{X}$. Hence, by  The Equivalence Theorem  $\xi^*$ is locally optimal (at $\BS{\beta}$) if and only if 
\begin{equation*}
p(x)\le g(x) \mbox{ for all } x \in \mathcal{X}. 
\end{equation*}
The above inequality is similar to that obtained in the proof of Lemma 1 in \Citet{soton346869} and thus the rest of our proof is analogous to that. 
\end{proof}
Obviously,  under D-optimality we have  $c=2$ and  $\BS{A}=\BS{M}^{-1}(\xi^*,\BS{\beta})$ whereas, under  A-optimality we have $c=\mathrm{tr}(\BS{M}^{-1}(\xi^*,\BS{\beta}))=\bigl(\sqrt{(a^2+1)/u_b}+\sqrt{(b^2+1)/u_a}\bigr)/(b-a)^2$ where $u_{a}=u(a,\BS{\beta})$ and  ${u_{b}=u(b,\BS{\beta})}$ and  $\BS{A}=\BS{M}^{-2}(\xi^*,\BS{\beta})$.  In general,  under Kiefer $\Phi_k$-criteria we  denote ${c=\mathrm{tr}(\BS{M}^{-k}(\xi^*,\BS{\beta}))}$ and $\BS{A}=\BS{M}^{-k-1}(\xi^*,\BS{\beta})$.  \par

As a consequence of Lemma \ref{lem3.1.1.},  we next provide sufficient conditions for a design whose support is the boundaries of $[0,1]$, i.e.,  $0$ and $1$ to be locally D- or A-optimal on $\mathcal{X}=[0,1]$ at a given $\BS{\beta}$.  Let $q(x)=1/u(x,\BS{\beta})$ and denote $q_0=q^{\frac{1}{2}}(0)$ and $q_1=q^{\frac{1}{2}}(1)$.  
\begin{theorem} \label{theo3.1.1b.}
Consider model $\BS{f}(x)=\bigl(1,x\bigr)^\trp$ and experimental region $\mathcal{X}=[0,1]$. Let a parameter point $\BS{\beta}=(\beta_0,\beta)^\trp$ be given. Let $q(x)$ be positive,  twice continuously differentiable.   Then:\\[.5ex]
($\rmnum{1}$) The unique locally D-optimal design (at $\BS{\beta}$) is the two-point design supported by\\ $0$ and $1$ with equal weights $1/2$ if 
\begin{equation}
q_{0}^{2}+q_{1}^{2} > q^{\prime\prime}(x)/2\, \mbox{ for all } x \in (0,1). \label{conv-D}
\end{equation}
($\rmnum{2}$) The unique locally A-optimal design (at $\BS{\beta}$) is the two-point design supported by\\ $0$ and $1$ with weights
\[
\omega_0^*=\frac{\sqrt{2} q_{0}}{\sqrt{2}q_{0}+q_{1}} \mbox { and }  
\omega_1^*=\frac{q_{1}}{\sqrt{2}q_{0}+q_{1}}, \mbox { respectively}  
\]
if 
\begin{equation}
q_{0}^{2}+q_{1}^{2}+\sqrt{2}q_{0}q_{1} > q^{\prime\prime}(x)/2\, \mbox{ for all } x \in (0,1). \label{conv-A}
\end{equation}
\end{theorem}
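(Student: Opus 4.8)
The plan is to verify optimality of each candidate design directly from The General Equivalence Theorem, using the same reduction that appears in the proof of Lemma~\ref{lem3.1.1.}. Fix the two-point design $\xi^*$ supported on $0$ and $1$ with the weights prescribed in the statement. By the discussion following Lemma~\ref{lem3.1.1.}, condition (\ref{eq-3.0}) specializes to $u(x)\BS{f}^\trp(x)\BS{A}\BS{f}(x)\le c$ for all $x\in[0,1]$, with $c=2$, $\BS{A}=\BS{M}^{-1}(\xi^*,\BS{\beta})$ in the D-case and $c=\mathrm{tr}(\BS{M}^{-1}(\xi^*,\BS{\beta}))$, $\BS{A}=\BS{M}^{-2}(\xi^*,\BS{\beta})$ in the A-case. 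Since $\BS{f}(x)=(1,x)^\trp$ and $u>0$, dividing by $u(x)$ rewrites this as
\[
\psi(x):=c\,q(x)-p(x)\ge 0\quad\text{on }[0,1],\qquad p(x):=\BS{f}^\trp(x)\BS{A}\BS{f}(x),
\]
where $p$ is a polynomial in $x$ of degree at most two and $q=1/u$.

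I would then carry out the short linear-algebra computation: form $\BS{M}(\xi^*,\BS{\beta})$ of the two-point design, invert it (and, for the A-case, square the inverse), and insert the prescribed weights. Two outputs are needed. First, the boundary values: a direct substitution should give $\psi(0)=\psi(1)=0$ --- this is the ``equality on the support'' half of the equivalence theorem, and it is precisely the identity that singles out the weights $\omega_0^*,\omega_1^*$. Second, the leading coefficient $(\BS{A})_{22}$ of $p$ and the constant $c$, because then $\psi''(x)=c\,q''(x)-2(\BS{A})_{22}$. Simplifying with $q_0^2\,u(0,\BS{\beta})=q_1^2\,u(1,\BS{\beta})=1$, the ratio $2(\BS{A})_{22}/c$ comes out to $2(q_0^2+q_1^2)$ in the D-case and $2(q_0^2+q_1^2+\sqrt2\,q_0q_1)$ in the A-case, so that hypotheses (\ref{conv-D}) and (\ref{conv-A}) are exactly the statement $\psi''(x)<0$ on $(0,1)$.

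The analytic conclusion is then immediate: $\psi$ is continuous on $[0,1]$ and twice differentiable with $\psi''<0$ on $(0,1)$, hence strictly concave, and it vanishes at both endpoints, so $\psi(x)>0$ for $x\in(0,1)$. Thus the sensitivity function is $\le c$ on $\mathcal X$, with equality exactly at $0$ and $1$, and The General Equivalence Theorem gives local D- (resp.\ A-) optimality of $\xi^*$. For uniqueness I would invoke that the information matrix of a locally $\Phi_k$-optimal design is unique: any locally optimal $\xi^{**}$ has $\BS{M}(\xi^{**},\BS{\beta})=\BS{M}(\xi^*,\BS{\beta})$ and therefore satisfies the same inequality (\ref{eq-3.0}); by the equality part its support lies in $\{x:\psi(x)=0\}=\{0,1\}$; a one-point design has singular information matrix and is excluded; and once $\mathrm{supp}(\xi^{**})=\{0,1\}$ the entries of $\BS{M}(\xi^{**},\BS{\beta})$ determine the weights, forcing $\xi^{**}=\xi^*$.

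I expect the only real work to be the bookkeeping in the A-case: inverting and squaring $\BS{M}(\xi^*,\BS{\beta})$, extracting $(\BS{M}^{-2})_{22}$ and $\mathrm{tr}(\BS{M}^{-1}(\xi^*,\BS{\beta}))$, and simplifying in $q_0,q_1$ so that the cross-term $\sqrt2\,q_0q_1$ appears with the correct coefficient while $\psi(0)=\psi(1)=0$ still holds with the asymmetric weights. The D-case is the same computation with equal weights and is essentially transparent; the concavity step and the uniqueness step are routine once the constants line up.
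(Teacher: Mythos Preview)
Your approach is essentially the paper's: both invoke condition (\ref{eq-3.0}) of the Equivalence Theorem, rewrite it as a pointwise inequality $p(x)\le c\,q(x)$ with a quadratic $p$, observe equality at $x=0,1$, and reduce the interior inequality to a second-derivative (convexity/concavity) condition that is exactly (\ref{conv-D}) or (\ref{conv-A}). Your write-up is a bit more careful in two places---you spell out the $A_{22}$/$c$ bookkeeping and you add a genuine uniqueness argument via the strict inequality $\psi>0$ on $(0,1)$ and the unique-information-matrix fact---while the paper simply asserts uniqueness; otherwise the arguments coincide.
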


\begin{proof}
\underbar{Ad ($i$)} Employing condition (\ref{eq-3.0}) of The Equivalence Theorem for $k=0$ implies that $\xi^*$ is locally D-optimal if and only if 
\begin{equation*}
(1-x)^2q_{0}^{2}+x^2q_{1}^{2}-q(x)\le 0\,\, \forall x \in [0,1]. 
\end{equation*}
Since the support points are $\{0,1\}$  the l.h.s. of the inequality above equals zero at the boundaries of $[0,1]$.  Then it is sufficient to show that the aforementioned l.h.s. is convex on the interior points  $(0,1)$ and this convexity realizes under condition (\ref{conv-D}) asserted in the theorem \\
\underbar{Ad ($ii$)}  This case  can be shown in analogy to case ($\rmnum{1}$) by employing  condition (\ref{eq-3.0}) of The Equivalence Theorem for $k=1$ with $\mathrm{tr}(\BS{M}^{-1}(\xi^*,\BS{\beta}))=(\sqrt{2}q_{0}+q_{1})^2$.
\end{proof}

Now consider the discrete experimental region $\mathcal{X}=\{a,b\},\, a,b \in \mathbb{R}$, i.e., the factor $x$ is binary. In view of Lemma \ref{lem3.1.1.} and  Section \ref{sec3-1} we provide locally D- and A-optimal designs in the next theorem.

\begin{theorem} \label{theo3.1.1.}
Consider model $\BS{f}(x)=\bigl(1,x\bigr)^\trp$ and experimental region $\mathcal{X}=\{a,b\}$
with real numbers $a,b$. Let a parameter point $\BS{\beta}=(\beta_0,\beta)^\trp$ be given. Let $u_{a}=u(a,\BS{\beta})$ and $u_{b}=u(b,\BS{\beta})$.  Then:\\[.5ex]
($\rmnum{1}$) The unique locally D-optimal design (at $\BS{\beta}$) is the two-point design supported by\\ $a$ and $b$ with equal weights $1/2$.\\[.5ex]
($\rmnum{2}$) The unique locally A-optimal design (at $\BS{\beta}$) is the two-point design supported by\\ $a$ and $b$ with weights
\[
\omega_a^*=\frac{u_{a}^{-1/2}\sqrt{1+b^2}}{u_{a}^{-1/2}\sqrt{1+b^2}+u_{b}^{-1/2}\sqrt{1+a^2}}, \ \ 
\omega_b^*=1-\omega_a^*. 
\]
\end{theorem}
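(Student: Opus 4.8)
The plan is to exploit the fact that on the two-point region $\mathcal{X}=\{a,b\}$ the class of designs with a nonsingular information matrix is extremely small, so that both assertions follow directly from the weight formulas assembled in Section~\ref{sec3-1} together with The General Equivalence Theorem, without any genuine optimization over the experimental region.

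First I would note that, since $a\neq b$, the vectors $\BS{f}(a)=(1,a)^\trp$ and $\BS{f}(b)=(1,b)^\trp$ are linearly independent, while a design putting all its mass at a single point produces a rank-one, hence singular, information matrix. Therefore every design $\xi$ with $\BS{M}(\xi,\BS{\beta})$ nonsingular has ${\rm supp}(\xi)=\{a,b\}$, i.e.\ it is a saturated design $\xi=\{(a,\omega_a),(b,1-\omega_a)\}$ with $0<\omega_a<1$; moreover the map $\omega_a\mapsto\BS{M}(\xi,\BS{\beta})$ is injective because $\BS{f}(a)\BS{f}^\trp(a)$ and $\BS{f}(b)\BS{f}^\trp(b)$ are linearly independent rank-one matrices. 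Consequently ``$\Phi_k$-optimal over all nonsingular designs'' coincides with ``$\Phi_k$-optimal among the saturated designs supported on $\{a,b\}$'', and uniqueness of the optimal design will follow from uniqueness of the optimal information matrix recalled in Subsection~\ref{subsec2-2} together with this injectivity.

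For part~(i) the D-optimal weights of a saturated design are $\omega_a^*=\omega_b^*=1/2$ by Lemma~5.3.1 of \citet{silvey1980optimal} (the case $k=0$ in Section~\ref{sec3-1}); since $\{a,b\}$ is the only admissible support this already identifies the unique locally D-optimal design, and the inequality in (\ref{eq-3.0}) holds trivially because $\mathcal{X}$ contains no points besides the support. For part~(ii) I would take $\BS{F}=\bigl[\BS{f}(a),\BS{f}(b)\bigr]^\trp$, invert the $2\times2$ matrix to get $\BS{F}^{-1}=(b-a)^{-1}\bigl[\begin{smallmatrix}b&-a\\-1&1\end{smallmatrix}\bigr]$, and read off the diagonal entries $c_{11}=(1+b^2)/(b-a)^2$ and $c_{22}=(1+a^2)/(b-a)^2$ of $\BS{C}=(\BS{F}^{-1})^\trp\BS{F}^{-1}$. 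Substituting into the A-optimal weight formula $\omega_i^*=c^{-1}(c_{ii}/u_i)^{1/2}$ from Section~\ref{sec3-1}, the common factor $|b-a|^{-1}$ cancels under normalization and yields exactly $\omega_a^*=u_a^{-1/2}\sqrt{1+b^2}\big/\bigl(u_a^{-1/2}\sqrt{1+b^2}+u_b^{-1/2}\sqrt{1+a^2}\bigr)$ and $\omega_b^*=1-\omega_a^*$; equivalently, these are precisely the weights making the sensitivity function in (\ref{eq-3.0}) with $k=1$ equal to $\mathrm{tr}(\BS{M}^{-1}(\xi^*,\BS{\beta}))$ at both $a$ and $b$, so optimality is immediate.

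There is no real analytic obstacle here; the only step deserving care is the reduction, namely arguing that one-point designs are inadmissible so that the single support $\{a,b\}$ must be used, after which the statement is just the $2\times2$ bookkeeping of Section~\ref{sec3-1}. If one prefers not to invoke the weight lemmas, the same two conclusions can be obtained by minimizing $-\log\det\BS{M}(\xi,\BS{\beta})$, respectively $\mathrm{tr}\,\BS{M}^{-1}(\xi,\BS{\beta})$, directly over the single scalar $\omega_a$, which are elementary one-variable optimizations.
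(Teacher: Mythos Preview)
Your proposal is correct and follows essentially the same route as the paper: the paper does not give an explicit proof but merely introduces the theorem with ``In view of Lemma~\ref{lem3.1.1.} and Section~\ref{sec3-1}'', and your argument is precisely the intended unpacking of that sentence --- reduce to the saturated support $\{a,b\}$, then apply the D- and A-weight formulas from Section~\ref{sec3-1}, with the explicit $2\times2$ computation of $\BS{C}=(\BS{F}^{-1})^\trp\BS{F}^{-1}$ for the A-case. Your treatment of uniqueness via injectivity of $\omega_a\mapsto\BS{M}(\xi,\BS{\beta})$ is in fact more careful than anything the paper spells out.
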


The locally D-optimal design given in the previous theorem is independent of the intensities, i.e., it is the same for all generalized linear models. Similar results for Poisson models were indicated in \citet{WANG20062831}.  However, for each setup (or each intensity form) of generalized linear models there is a locally A-optimal design that even varies with parameter values. Since $a$ and $b$ are the only design points there is a locally D- or A-optimal design at any  parameter value in the parameter space of $\BS{\beta}=(\beta_0,\beta_1)^\trp$. \par

\section{Multiple regression models} \label{sec3-3}

In this section we consider a first order model  with multiple factors,
\begin{equation}
\BS{f}(\BS{x})=\bigl(1,\BS{x}^\trp\bigr)^\trp\,\, \mbox{where}\,\,\ \ \BS{x}\in\mathcal{X}. \label{eq3.2}
\end{equation}   
The linear predictor is determined by $\eta(\BS{x},\BS{\beta})=\BS{f}^\trp(\BS{x})\BS{\beta}=\beta_0+\sum_{i=1}^{\nu}\beta_ix_i$ with binary factors. That is a discrete experimental region is considered and has the form $\mathcal{X}=\{0,1\}^\nu,\nu\ge 2$.   We aim at constructing locally D- and A-optimal designs for a given parameter point $\BS{\beta}$  adopting particular analytic solutions. \par

To this end, we firstly begin with  a two-factor model
\begin{equation}
\BS{f}(\BS{x})=\bigl(1,x_1,x_2\bigr)^\trp \mbox{ where }\BS{x}=(x_1,x_2)^\trp \in \mathcal{X}=\{0,1\}^2.   \label{eq3.2.2}
\end{equation} 
The experimental region can be written as  $\mathcal{X}=\{(0,0)^\trp,(1,0)^\trp(0,1)^\trp,(1,1)^\trp\}$. Let us denote the design points by $\BS{x}^*_1=(0,0)^\trp$,  $\BS{x}^*_2=(1,0)^\trp$, $\BS{x}^*_3=(0,1)^\trp$, and $\BS{x}^*_4=(1,1)^\trp$. 
The following results  that are provided in Theorem \ref{theo3.1.2.} and Theorem \ref{theo3.1.3.} under generalized linear model (\ref{eq3.2.2})  are  extensions of the corresponding results that were obtained in \citet{GAFFKE2019} under a gamma model, i.e.,  a GLM with inverse link function $\beta_0+\beta_1x_1+\beta_2x_2=1/\mu$, where $\mu>0$ with intensity $u(\BS{x},\BS{\beta})=(\beta_0+\beta_1x_1+\beta_2x_2)^{-2}$ and the unit cube $[0,1]^2$ as an experimental region. The proofs are analogous to those in the reference.\par

\begin{theorem}\label{theo3.1.2.}
Consider model (\ref{eq3.2.2}) and experimental region $\mathcal{X}=\{0,1\}^2$.
 For a given parameter point $\BS{\beta}=(\beta_0,\beta_1,\beta_2)^\trp$ let $u_i=u(\BS{x}^*_i,\BS{\beta})$\,($1\le i\le4$). Denote by $u_{(1)}\le u_{(2)}\le u_{(3)}\le u_{(4)}$ the intensity values $u_1,u_2,u_3,u_4$ rearranged in ascending order. 
Then:\\[.5ex]
(o) The locally D-optimal design $\xi^*$ (at $\BS{\beta}$) is unique.\\[.5ex]
($\rmnum{1}$) If \ $u_{(1)}^{-1}\,\ge u_{(2)}^{-1}+u_{(3)}^{-1}+u_{(4)}^{-1}$ \ then  
$\xi^*$ is a three-point design supported by the three design points whose intensity values are given by
$u_{(2)}$, $u_{(3)}$, $u_{(4)}$, with equal weights $1/3$.\\[.5ex]
($\rmnum{2}$) If \ $u_{(1)}^{-1}\,< u_{(2)}^{-1}+u_{(3)}^{-1}+u_{(4)}^{-1}$ \ then  
$\xi^*$ is a four-point design supported by the four design points $\BS{x}^*_1,\BS{x}^*_2,\BS{x}^*_3,\BS{x}^*_4$
with weights $\omega_1^*,\omega_2^*,\omega_3^*,\omega_4^*$ which are uniquely determined by the condition
\begin{equation}
 \omega_i^*>0\  (1\le i\le4),\  \sum\limits_{i=1}^4\omega_i^*=1,\ \mbox{and }\ 
u_i\omega_i^*\bigl({\textstyle\frac{1}{3}}-\omega_i^*\bigr)\ \ (1\le i\le4)\ \mbox{are equal.}
\label{eq3.3}
\end{equation}
\end{theorem}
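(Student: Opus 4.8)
The plan is to reduce the problem to an optimisation over the weight simplex and then to split it into a boundary (three-point) regime and an interior (four-point) regime, using the Kiefer--Wolfowitz condition~(\ref{eq-3.0}) to decide which regime occurs. Since $\mathcal{X}=\{0,1\}^2$ contains only the four points $\BS{x}_1^*,\dots,\BS{x}_4^*$, finding the locally D-optimal design means maximising $\det\BS{M}(\xi,\BS{\beta})$ over $\Delta=\{\BS{\omega}=(\omega_1,\dots,\omega_4):\omega_i\ge0,\ \sum_i\omega_i=1\}$, and this maximum is attained because $\det\BS{M}(\xi,\BS{\beta})$ is a polynomial in $\BS{\omega}$ and $\Delta$ is compact. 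First I would record the combinatorial facts that carry the proof: any three of $\BS{f}(\BS{x}_1^*),\dots,\BS{f}(\BS{x}_4^*)$ are linearly independent with $3\times3$ determinant $\pm1$, and the only linear relation among all four is $\BS{f}(\BS{x}_1^*)-\BS{f}(\BS{x}_2^*)-\BS{f}(\BS{x}_3^*)+\BS{f}(\BS{x}_4^*)=\BS{0}$. Consequently all the $d_i$ of Lemma~\ref{lem3.0.2.} satisfy $d_i^2=1$, so the Cauchy--Binet identity~(\ref{eq3.1}) specialises to
\[
\det\BS{M}(\xi,\BS{\beta})=\sum_{1\le i<j<k\le4}u_iu_ju_k\,\omega_i\omega_j\omega_k=\Bigl(\prod_{i=1}^4 u_i\omega_i\Bigr)\sum_{i=1}^4\frac{1}{u_i\omega_i}\,.
\]
Since any three regression vectors form a basis, $\BS{M}(\xi,\BS{\beta})$ is nonsingular precisely when at most one $\omega_i$ vanishes; combining this with the strict concavity of $\log\det$ on positive definite matrices and the injectivity of $\BS{\omega}\mapsto\BS{M}(\xi,\BS{\beta})$ (the four matrices $u_i\BS{f}(\BS{x}_i^*)\BS{f}^\trp(\BS{x}_i^*)$ are linearly independent) proves (o) and shows that $\xi^*$ is supported either on all four or on exactly three of the points.

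For the interior regime I would set $\psi(\BS{\omega})=\log\det\BS{M}(\xi,\BS{\beta})$ and differentiate the closed form above: $\partial\psi/\partial\omega_i=\omega_i^{-1}-(u_i\omega_i^2 S)^{-1}$ with $S=\sum_j(u_j\omega_j)^{-1}$. Setting all $\partial\psi/\partial\omega_i$ equal to a common value $\lambda$ and multiplying by $\omega_i$ gives $(S u_i\omega_i)^{-1}=1-\lambda\omega_i$; summing over $i$ forces $\lambda=3$, whence $u_i\omega_i(\tfrac13-\omega_i)=1/(3S)$ for all $i$ --- which is exactly condition~(\ref{eq3.3}). Because $\psi$ is concave on $\Delta$, any interior stationary point is automatically a global maximiser, and by strict concavity there is at most one such point; so part~($\rmnum{2}$) will follow once I know that an interior maximiser occurs exactly in the stated regime.

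The case split comes from the three-point designs together with~(\ref{eq-3.0}). On the face $\omega_j=0$ the restricted problem is D-optimality of a saturated three-point design, whose optimum carries equal weights $\tfrac13$ (see Section~\ref{sec3-1}), with $\det\BS{M}=\tfrac1{27}\prod_{i\ne j}u_i$; this is largest when the deleted point carries the smallest intensity $u_{(1)}$, which is the design in part~($\rmnum{1}$). For that design the right-hand side of~(\ref{eq-3.0}) with $k=0$ equals $p=3$, its sensitivity function equals $3$ at its three support points (as for any equally-weighted saturated design), and expressing the deleted regression vector in the basis of the other three via the linear relation displayed above --- whose coefficients are all $\pm1$ --- the sensitivity at the deleted point works out to $3\,u_{(1)}\bigl(u_{(2)}^{-1}+u_{(3)}^{-1}+u_{(4)}^{-1}\bigr)$. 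Hence this design satisfies~(\ref{eq-3.0}), i.e.\ is locally D-optimal, if and only if $u_{(1)}^{-1}\ge u_{(2)}^{-1}+u_{(3)}^{-1}+u_{(4)}^{-1}$; this inequality also forces $u_{(1)}<u_{(2)}$, so the support in part~($\rmnum{1}$) is unambiguous.

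It remains to assemble the two cases. If $u_{(1)}^{-1}\ge u_{(2)}^{-1}+u_{(3)}^{-1}+u_{(4)}^{-1}$, the three-point design above satisfies~(\ref{eq-3.0}) and is therefore the unique locally D-optimal design, which is part~($\rmnum{1}$). Otherwise, by the sensitivity computation just made no three-point design satisfies~(\ref{eq-3.0}), so $\xi^*$ lies in the interior of $\Delta$; being an interior maximiser it solves the stationarity equations, that is~(\ref{eq3.3}), and the concavity of $\psi$ gives both existence and uniqueness of that solution --- this is part~($\rmnum{2}$). I expect the boundary bookkeeping to be the main obstacle: evaluating the sensitivity of the candidate three-point design at the deleted point through the $\pm1$ linear relation among the regression vectors and reading off the threshold correctly. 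Once that is secured, the mutual exclusivity of the two regimes and the existence of the interior maximiser in part~($\rmnum{2}$) are automatic, since an optimum exists and cannot lie on a face there.
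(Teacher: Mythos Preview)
Your argument is correct and complete. The paper itself does not give a proof of this theorem; it merely remarks that the result extends the gamma-model case of \citet{GAFFKE2019} and that ``the proofs are analogous to those in the reference.'' Your route is the standard one and is presumably what that reference does: specialise the Cauchy--Binet identity~(\ref{eq3.1}) using $d_i^2=1$, combine strict concavity of $\log\det$ with the linear independence of the four rank-one matrices $u_i\BS{f}(\BS{x}_i^*)\BS{f}^\trp(\BS{x}_i^*)$ to get uniqueness~(o), derive the interior Lagrange conditions and reduce them to~(\ref{eq3.3}), and read off the three-point threshold from condition~(\ref{eq-3.0}) evaluated at the deleted vertex via the $\pm1$ linear relation among the regression vectors.

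One small point you leave implicit deserves a sentence when you write this up: in case~($\rmnum{2}$) you must rule out \emph{all four} equally-weighted three-point designs, not only the one deleting $u_{(1)}$. This follows either from your determinant comparison together with uniqueness (any global boundary optimum would have to beat the best face optimum, which already fails~(\ref{eq-3.0})), or directly by observing that $u_j^{-1}\ge\sum_{i\ne j}u_i^{-1}$ forces $u_j^{-1}>u_i^{-1}$ for every $i\ne j$ and hence $u_j=u_{(1)}$.
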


\begin{remark}\ \\[-5ex]                
\begin{enumerate}
\item It is already seen from the optimality conditions asserted in part ($\rmnum{1}$) of   Theorem \ref{theo3.1.2.} that  the design points with highest intensities perform as a support of a locally D-optimal design at a given parameter value. 
\item The optimality condition asserted in part ($\rmnum{2}$) of   Theorem \ref{theo3.1.2.} applies only when the optimality conditions for the three-point (saturated) designs in ($\rmnum{1}$) cannot be  satisfied. 
\end{enumerate}
\end{remark}

Theorem \ref{theo3.1.2.} covers various results in the literature. For examples; see \citet{10.2307/24310039} for  binary responses with several link functions and see \citet{10.1007/978-3-319-00218-7_14} for count data in item response theory.\par

Now consider the case of equally effect sizes; i.e., $\beta_1=\beta_2=\beta$. Next we give explicit formulas for the weights of locally D-optimal four-point designs at parameter points $\BS{\beta}=(\beta_0,\beta_1,\beta_2)$. 

\begin{corollary}\label{theo4.2.1.}
Under the assumptions of Theorem \ref{theo3.1.2.} let the parameter point ${\BS{\beta}=(\beta_0,\beta_1,\beta_2)^\trp}$ be given with $\beta_1=\beta_2=\beta$ such that  assumption ($\rmnum{2}$) of Theorem \ref{theo3.1.2.} is  fulfilled.  Then the locally D-optimal design (at $\BS{\beta}$) 
is supported by the four design points $\BS{x}_1^*,\BS{x}_2^*,\BS{x}_3^*,\BS{x}_4^*$ with weights
\begin{align*}
\omega_{1}^*&=\frac{3}{8}+\frac{1}{4}\Big(1+\frac{u_{1}}{u_{4}}-4\frac{u_{1}}{u_{2}}\Big)^{-1},\\
\omega_{2}^*&=\omega_{3}^*=\frac{1}{2}\Big(4-\frac{u_{2}}{u_{1}}-\frac{u_{2}}{u_{4}}\Big)^{-1},\\
\omega_{4}^*&=\frac{3}{8}+\frac{1}{4}\Big(1+\frac{u_{4}}{u_{1}}-4\frac{u_{4}}{u_{2}}\Big)^{-1}.
 \end{align*}
\end{corollary}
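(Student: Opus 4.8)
The plan is to obtain the corollary as a direct specialization of Lemma~\ref{lem3.0.2.}. First I would record the regression vectors at the four design points of model~(\ref{eq3.2.2}): $\BS{f}(\BS{x}_1^*)=(1,0,0)^\trp$, $\BS{f}(\BS{x}_2^*)=(1,1,0)^\trp$, $\BS{f}(\BS{x}_3^*)=(1,0,1)^\trp$, $\BS{f}(\BS{x}_4^*)=(1,1,1)^\trp$, and compute the four $3\times3$ determinants $d_1,d_2,d_3,d_4$ appearing in Lemma~\ref{lem3.0.2.}. A short calculation gives $d_i\in\{-1,1\}$ for every $i$, hence $d_i^2=1$ for all $1\le i\le 4$; in particular $d_i\neq 0$ and the symmetry hypothesis $d_2^2=d_3^2$ of Lemma~\ref{lem3.0.2.} hold.

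Next I would check $u_2=u_3$. Since the intensity $u(\BS{x},\BS{\beta})$ depends on $\BS{x}$ only through the linear predictor $\BS{f}^\trp(\BS{x})\BS{\beta}$ (cf.\ (\ref{eq2.3})), and with $\beta_1=\beta_2=\beta$ both $\BS{x}_2^*=(1,0)^\trp$ and $\BS{x}_3^*=(0,1)^\trp$ give the same linear predictor $\beta_0+\beta$, we obtain $u_2=u(\BS{x}_2^*,\BS{\beta})=u(\BS{x}_3^*,\BS{\beta})=u_3$. Thus all hypotheses of Lemma~\ref{lem3.0.2.} are satisfied, so that lemma identifies the design minimizing $-\log\det\bigl(\BS{M}(\xi,\BS{\beta})\bigr)$ among all designs with support $\{\BS{x}_1^*,\BS{x}_2^*,\BS{x}_3^*,\BS{x}_4^*\}$; substituting $d_1^2=d_2^2=d_3^2=d_4^2=1$ into its weight formulas collapses them exactly to the expressions claimed in the corollary.

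Finally I would invoke part~(\rmnum{2}) of Theorem~\ref{theo3.1.2.}, which is assumed here: under that assumption the unique locally D-optimal design at $\BS{\beta}$ is a four-point design with full support on $\BS{x}_1^*,\ldots,\BS{x}_4^*$. Hence minimizing $-\log\det\bigl(\BS{M}(\xi,\BS{\beta})\bigr)$ over designs with this support coincides with minimizing it globally, and the design produced by Lemma~\ref{lem3.0.2.} is the locally D-optimal design. The main point requiring care is this last identification --- one must be sure that restricting to the fixed four-point support does not discard the global optimum --- together with the bookkeeping of matching each determinant $d_h$ and each intensity $u_i$ to the correct index when specializing the lemma; positivity and well-definedness of the resulting weights are then automatic, being inherited from the existence and uniqueness asserted in Theorem~\ref{theo3.1.2.}(\rmnum{2}). (Alternatively, one could bypass Lemma~\ref{lem3.0.2.} and verify directly that the stated weights satisfy the characterization~(\ref{eq3.3}), but the route above is shorter.)
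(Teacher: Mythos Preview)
Your proposal is correct and follows essentially the same route as the paper: invoke Theorem~\ref{theo3.1.2.}(\rmnum{2}) to ensure the optimal design is supported on all four points, then apply Lemma~\ref{lem3.0.2.} with $d_i^2=1$ for all $i$ and $u_2=u_3$. Your write-up is in fact more explicit than the paper's, which merely asserts $d_i^2=1$ and $u_2=u_3$ without spelling out the linear-predictor argument for the latter.
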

\begin{proof}
Since assumption ($\rmnum{2}$) of Theorem \ref{theo3.1.2.} is  fulfilled by a point $\BS{\beta}$ the design is supported by  all points $\BS{x}^*_1=(0,0)^\trp$,  $\BS{x}^*_2=(1,0)^\trp$, $\BS{x}^*_3=(0,1)^\trp$,  $\BS{x}^*_4=(1,1)^\trp$. Then the optimal weights are obtained in view of  Lemma \ref{lem3.0.2.} where  we have $d_i^2=1\,(1\le i \le 4)$ and $u_2=u_3$. Hence, the results follow.  
\end{proof}

 In analogy to Theorem \ref{theo3.1.2.} we introduce locally A-optimal designs in the next theorem where also  the design points with highest intensities perform as a support of a locally A-optimal design at a given parameter value. 

\begin{theorem} \label{theo3.1.3.} Consider the assumptions and notations of  Theorem \ref{theo3.1.2.}. Denote $q_i=u_i^{-1/2}$\,${(1\le i \le 4)}$.  Then:\\[.5ex]
\hspace*{1.5ex}(o) The locally A-optimal design $\xi^*$ (at $\BS{\beta}$) is unique.\\[-3ex]
\begin{enumerate}[label=(\roman*)]
\item  
If \ $q_{1}^{2}\,\ge q_{2}^{2}+q_{3}^{2}+q_{4}^{2}+ q_{2}q_{3}+2\sqrt{\frac{2}{3}} q_{2}q_{4}+2\sqrt{\frac{2}{3}} q_{3}q_{4} $ \ then 
\[
\xi^*=\left(\begin{array}{ccc} \BS{x}^*_2 & \BS{x}^*_3 &\BS{x}^*_4\\ [.5ex] 
\sqrt{2}q_{2}/c&\sqrt{2}q_{3}/c  &\sqrt{3}q_{4}/c\end{array}\right).
\]
\item  
If \ $q_{2}^{2}\,\ge q_{1}^{2}+q_{3}^{2}+q_{4}^{2}+ q_{1}q_{3}+\sqrt{2}q_{3}q_{4}$ \ then 
\[
\xi^*=\left(\begin{array}{ccc} \BS{x}^*_1 & \BS{x}^*_3 &\BS{x}^*_4\\ [.5ex] 
\sqrt{2}q_{1}/c&\sqrt{2}q_{3}/c  &q_{4}/c\end{array}\right).
\]
\item  
If \ $q_{3}^{2}\,\ge\,q_{1}^{2} + q_{2}^{2} + q_{4}^{2} + q_{1}q_{2} + \sqrt{2}q_{2}q_{4}$ \ then
\[
\xi^*=\left(\begin{array}{ccc} \BS{x}^*_1 & \BS{x}^*_2 &\BS{x}^*_4\\ [.5ex] 
\sqrt{2}q_{1}/c &\sqrt{2}q_{2} &q_{4}/c\end{array}\right).
\]
\item  
If \ $q_{4}^{2}\,\ge\,q_{1}^{2} + q_{2}^{2} + q_{3}^{2} + \frac{2}{\sqrt{3}}q_{1}q_{2} + \frac{2}{\sqrt{3}}q_{1}q_{3}$ \ then
 \[
 \xi^*=\left(\begin{array}{ccc} \BS{x}^*_1 &\BS{x}^*_2 &\BS{x}^*_3\\ [.5ex] 
\sqrt{3}q_{1}/c &q_{2}/c &q_{3}/c\end{array}\right).
\]
\end{enumerate}
For each case ($\rmnum{1}$) -- ($\rmnum{4}$), the constant $c$ appearing in the weights equals the sum of the numerators of the three ratios. If none of the cases ($\rmnum{1}$) -- ($\rmnum{4}$) applies then $\xi^*$ is supported by the four design points $\BS{x}^*_1, \BS{x}^*_2, \BS{x}^*_3, \BS{x}^*_4$.
\end{theorem}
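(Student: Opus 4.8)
The plan is to invoke the General Equivalence Theorem for the A-criterion ($k=1$), namely inequality (\ref{eq-3.0}), and check it for each of the four candidate designs, exactly as in the analogous gamma-model result of \citet{GAFFKE2019} to which the theorem alludes. First I would establish part (o): for $0\le k<\infty$ the criterion $\Phi_k$ is strictly convex on the set of nonsingular information matrices, so as already noted in Subsection \ref{subsec2-2} the information matrix of a locally A-optimal design is unique; since $\BS{f}(\BS{x})=(1,x_1,x_2)^\trp$ is a bijection between the four points of $\mathcal{X}=\{0,1\}^2$ and the four vectors $\BS{f}(\BS{x}^*_i)$, and any A-optimal design here has a nonsingular information matrix, the design itself is uniquely determined by its moment matrix, hence unique.

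Next, for each of the four cases I would treat the candidate three-point (saturated) design $\xi^*$ supported by the three points whose $q$-values appear in the stated formula. The weights quoted are exactly those produced by Lemma \ref{lem3.0.3.}-type reasoning (here via the A-optimal weight formula of \citet{pukelsheim2006optimal}/\citet{GAFFKE2019} recalled in Section \ref{sec3-1}): for a saturated design with design matrix $\BS{F}=[\BS{f}(\BS{x}^*_i)]^\trp$ over the chosen triple one has $\omega_i^*\propto (c_{ii}/u_i)^{1/2}$ with $c_{ii}$ the diagonal entries of $(\BS{F}^{-1})^\trp\BS{F}^{-1}$. I would compute $\BS{F}^{-1}$ for each of the four triples drawn from $\{(0,0),(1,0),(0,1),(1,1)\}$ — these are fixed small integer matrices — obtain the $c_{ii}$ (the constants $1,1,2$ or $1,2,2$ etc. that explain the $\sqrt2,\sqrt3$ factors), and thereby confirm that the listed weights are the A-optimal weights on that support and that $\mathrm{tr}(\BS{M}^{-1}(\xi^*,\BS{\beta}))=c^2$ with $c$ the sum of the three numerators. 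Then, writing $\BS{A}=\BS{M}^{-2}(\xi^*,\BS{\beta})$, the sensitivity function $\psi(\BS{x})=u(\BS{x},\BS{\beta})\,\BS{f}^\trp(\BS{x})\BS{A}\BS{f}(\BS{x})$ is a function of the four discrete points only; it equals $c^2$ at the three support points by the equality part of (\ref{eq-3.0}), and the optimality condition reduces to the single inequality $\psi(\BS{x}^*_j)\le c^2$ at the one omitted point $\BS{x}^*_j$. Substituting $\BS{f}(\BS{x}^*_j)$ and expanding $\BS{f}^\trp(\BS{x}^*_j)\BS{M}^{-2}(\xi^*,\BS{\beta})\BS{f}(\BS{x}^*_j)$ in terms of $q_1,\dots,q_4$ yields precisely the displayed threshold inequality (e.g. $q_1^2\ge q_2^2+q_3^2+q_4^2+q_2q_3+2\sqrt{2/3}\,q_2q_4+2\sqrt{2/3}\,q_3q_4$ in case ($\rmnum{1}$)); conversely, when that inequality holds the Equivalence Theorem certifies optimality, and uniqueness from part (o) finishes the case.

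Finally, for the closing sentence I would argue that if none of the four inequalities holds, then no three-point design can be A-optimal: each saturated design on a triple is A-optimal only if its sensitivity function stays below $\mathrm{tr}(\BS{M}^{-1})$ at the fourth point, which is exactly the corresponding displayed inequality, so the failure of all four rules out every three-point support; since the A-optimal design exists (the criterion is lower semicontinuous and coercive on the compact set of designs with nonsingular information matrix, the model being saturable) and has full-dimensional, hence here four-point, support, it must be supported on all of $\BS{x}^*_1,\BS{x}^*_2,\BS{x}^*_3,\BS{x}^*_4$. I expect the main obstacle to be the bookkeeping in the second step: computing $\BS{M}^{-2}(\xi^*,\BS{\beta})$ in closed form and expanding the quadratic form at the omitted vertex cleanly enough that the mixed terms collapse to the exact constants $q_2q_3$, $2\sqrt{2/3}\,q_2q_4$, $\sqrt2\,q_3q_4$, etc. — the asymmetry between the vertex $(0,0)$, the "edge" vertices $(1,0),(0,1)$, and the "far" vertex $(1,1)$ is what produces the four structurally different inequalities, and keeping the three cases' algebra straight (rather than any conceptual difficulty) is where the work lies.
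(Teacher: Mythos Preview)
Your proposal is correct and follows essentially the same route the paper takes: the paper does not write out a proof of this theorem but declares it ``analogous to'' the gamma-model result in \citet{GAFFKE2019}, and the structure you outline---Pukelsheim-type A-optimal weights from Section~\ref{sec3-1} on each saturated triple, then the Equivalence Theorem (\ref{eq-3.0}) with $k=1$ checked only at the one omitted vertex---is exactly that argument, and is the same computation carried out explicitly in the paper for the $\nu$-factor case in Theorem~\ref{theo3.1.5.}. Your treatment of (o) via strict convexity plus linear independence of the four rank-one matrices $\BS{f}(\BS{x}_i^*)\BS{f}(\BS{x}_i^*)^\trp$, and of the closing sentence via exhaustion of the four saturated supports, are the natural completions and match the intended reasoning.
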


In the following we  consider model (\ref{eq3.2}) for a general number of factors, $\nu\ge2$, and with the experimental region  $\mathcal{X}=\{0,1\}^\nu$. Here, we are interested in providing an extension of  locally D- and A-optimal designs with support $(0,0)^\trp, (1,0)^\trp, (0,1)^\trp$  given in the preceding theorems under a two-factor model.
 
\begin{theorem}\label{theo3.1.4.}            
Consider model (\ref{eq3.2}) with experimental region $\mathcal{X}=\{0,1\}^\nu$, where $\nu\ge2$. Denote the design points by 
\[
\BS{x}_1^*=(0,\dots,0)^\trp,\ \  \BS{x}_2^*=(1,\dots,0)^\trp,\ \ldots,\   \BS{x}_{\nu+1}^*=(0,\dots,1)^\trp. 
\]
For a given parameter point $\BS{\beta}=(\beta_0,\beta_1,\ldots,\beta_\nu)^\trp$ let $u_i=u(\BS{x}_i^*,\BS{\beta})\,(1 \le i \le \nu+1)$. Then the design $\xi^*$   which assigns equal weights   $1/(\nu+1)$ to 
the design points $\BS{x}_i^*$\,\, for all $\,(1 \le i \le \nu+1)$ is locally D-optimal (at $\BS{\beta}$) if and only if 
 \begin{equation}
u_1^{-1}\bigl(1-\textstyle\sum\limits_{j=1}^\nu x_j)^2 + \sum\limits_{i=1}^\nu u_{i+1}^{-1}x_i^2\le
u^{-1}(\BS{x},\BS{\beta})\,\, \mbox{ for all }\,\, \BS{x}\in\bigl\{0,1\bigr\}^\nu.  \label{eqhhh}
\end{equation}
 \end{theorem}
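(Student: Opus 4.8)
The plan is to apply the General Equivalence Theorem in the form (\ref{eq-3.0}) with $k=0$. Since $\xi^*$ is a saturated design — its support $\{\BS{x}_1^*,\ldots,\BS{x}_{\nu+1}^*\}$ consists of $p=\nu+1$ points — assigning the equal weights $1/(\nu+1)$ is, by Lemma~5.3.1 of \citet{silvey1980optimal}, precisely the D-best choice of weights on this support, which is why the uniform design is the natural candidate. To decide whether it is D-optimal over \emph{all} designs I would check condition (\ref{eq-3.0}) with $k=0$, whose right-hand side is $\mathrm{tr}\bigl(\BS{M}^{0}(\xi^*,\BS{\beta})\bigr)=\mathrm{tr}(\BS{I}_{\nu+1})=\nu+1$. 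Thus the whole argument comes down to rewriting the sensitivity function $u(\BS{x},\BS{\beta})\,\BS{f}^\trp(\BS{x})\BS{M}^{-1}(\xi^*,\BS{\beta})\BS{f}(\BS{x})$ in a form that makes the comparison with $\nu+1$ transparent.

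First I would write down the design matrix. With the intercept placed in the first coordinate, $\BS{f}(\BS{x}_1^*)=(1,0,\ldots,0)^\trp$ and $\BS{f}(\BS{x}_{i+1}^*)=(1,\BS{e}_i^\trp)^\trp$ for $1\le i\le\nu$, so that $\BS{F}=[\BS{f}(\BS{x}_1^*),\ldots,\BS{f}(\BS{x}_{\nu+1}^*)]^\trp$ is the $(\nu+1)\times(\nu+1)$ matrix whose first row is $(1,0,\ldots,0)$ and whose $(i{+}1)$-st row is $(1,\BS{e}_i^\trp)$; this matrix is lower triangular with unit diagonal, hence $\det\BS{F}=1$, the vectors $\BS{f}(\BS{x}_i^*)$ are linearly independent, and $\xi^*$ is well defined with a nonsingular information matrix. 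Writing $\BS{D}=\mathrm{diag}(u_1,\ldots,u_{\nu+1})$, the equal-weight design has $\BS{M}(\xi^*,\BS{\beta})=\tfrac{1}{\nu+1}\BS{F}^\trp\BS{D}\BS{F}$, and conjugating its inverse by $\BS{F}$ yields the key identity $\BS{F}\,\BS{M}^{-1}(\xi^*,\BS{\beta})\,\BS{F}^\trp=(\nu+1)\,\BS{D}^{-1}$, so that $\BS{F}^{-1}$ never has to be written out explicitly.

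The decisive step is to expand an arbitrary regression vector $\BS{f}(\BS{x})$, $\BS{x}=(x_1,\ldots,x_\nu)^\trp\in\{0,1\}^\nu$, in the basis $\{\BS{f}(\BS{x}_i^*)\}_{i=1}^{\nu+1}$. A one-line check gives
\[
\BS{f}(\BS{x})=\Bigl(1-\textstyle\sum_{j=1}^{\nu}x_j\Bigr)\BS{f}(\BS{x}_1^*)+\sum_{i=1}^{\nu}x_i\,\BS{f}(\BS{x}_{i+1}^*)=\BS{F}^\trp\BS{c}(\BS{x}),\qquad \BS{c}(\BS{x})=\bigl(1-\textstyle\sum_{j}x_j,\;x_1,\ldots,x_\nu\bigr)^\trp .
\]
Substituting this and using $\BS{F}\,\BS{M}^{-1}(\xi^*,\BS{\beta})\,\BS{F}^\trp=(\nu+1)\BS{D}^{-1}$ collapses the sensitivity function to
\[
u(\BS{x},\BS{\beta})\,\BS{f}^\trp(\BS{x})\BS{M}^{-1}(\xi^*,\BS{\beta})\BS{f}(\BS{x})=(\nu+1)\,u(\BS{x},\BS{\beta})\Bigl[u_1^{-1}\bigl(1-\textstyle\sum_{j}x_j\bigr)^2+\sum_{i=1}^{\nu}u_{i+1}^{-1}x_i^2\Bigr].
\]
Because $u(\BS{x},\BS{\beta})>0$ and $\nu+1>0$, dividing the Equivalence-Theorem inequality (\ref{eq-3.0}) (with $k=0$, right-hand side $\nu+1$) by $(\nu+1)\,u(\BS{x},\BS{\beta})$ turns it — reversibly — into exactly (\ref{eqhhh}), required for every $\BS{x}\in\{0,1\}^\nu$. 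Since (\ref{eq-3.0}) is a genuine ``if and only if'' characterization of local D-optimality, both directions of the theorem follow at once; moreover at $\BS{x}=\BS{x}_i^*$ the vector $\BS{c}(\BS{x})$ is a standard unit vector, so the sensitivity function equals $\nu+1$ there, as it must on the support.

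I do not expect a substantive obstacle: the proof is an application of the equivalence theorem wrapped around elementary linear algebra. The only points needing a little care are the bookkeeping of which coordinate carries the intercept, the verification that $\BS{c}(\BS{x})$ is indeed the coordinate vector of $\BS{f}(\BS{x})$ relative to $\BS{F}^\trp$ (immediate from the triangular form of $\BS{F}$), and confirming that the reduced inequality matches (\ref{eqhhh}) verbatim once one recalls $u(\BS{x}_i^*,\BS{\beta})=u_i$ on the support.
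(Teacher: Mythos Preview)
Your proof is correct and follows essentially the same approach as the paper: both apply the Equivalence Theorem with $k=0$, write $\BS{M}(\xi^*,\BS{\beta})=\frac{1}{\nu+1}\BS{F}^\trp\BS{U}\BS{F}$ for the stated design matrix $\BS{F}$, and reduce the sensitivity function to $(\nu+1)\,u(\BS{x},\BS{\beta})\bigl(u_1^{-1}(1-\sum_j x_j)^2+\sum_i u_{i+1}^{-1}x_i^2\bigr)$. The only cosmetic difference is that the paper writes out $\BS{F}^{-1}$ explicitly and computes $(\BS{F}^{-1})^\trp\BS{f}(\BS{x})$, whereas you obtain the same coordinate vector $\BS{c}(\BS{x})=(\BS{F}^\trp)^{-1}\BS{f}(\BS{x})$ by expanding $\BS{f}(\BS{x})$ in the basis $\{\BS{f}(\BS{x}_i^*)\}$; the resulting expression and the conclusion are identical.
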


\begin{proof}
Define the $(\nu+1)\times (\nu+1)$ design matrix $\BS{F}=\bigl[\BS{f}(\BS{x}_1^*),\ldots,\BS{f}(\BS{x}_{\nu+1}^*)\bigr]^\trp$, then
\[
\BS{M}(\xi^*,\BS{\beta})=\frac{1}{\nu+1}\BS{F}^\trp\BS{U}\BS{F},\ \mbox{ where }
\BS{U}={\rm diag}\bigl(u_i\bigr)_{i=1}^{\nu+1}. 
\]
We have
\begin{equation}
\BS{F}=\left[\begin{array}{cc}1 & \BS{0}_{1\times\nu}\\ \BS{1}_{\nu\times1} & \BS{I}_\nu
\end{array}\right] ,\ \mbox{ hence }
\BS{F}^{-1}=\left[\begin{array}{cc}1 & \BS{0}_{1\times\nu}\\ -\BS{1}_{\nu\times1} & \BS{I}_\nu\end{array}\right], \label{eq4.228}
 \end{equation}
where $\BS{0}_{1\times\nu}$, $\BS{1}_{\nu\times1}$, and $\BS{I}_\nu$ denote the
$\nu$-dimensional row vector of zeros, the $\nu$-dimensional column vector of ones, and the $\nu\times\nu$ unit matrix, respectively.  So,  by  condition (\ref{eq-3.0}) of The Equivalence Theorem for $k=0$  the design is locally D-optimal if and only if
\begin{eqnarray}
u(\BS{x},\BS{\beta})\,\BS{f}^\trp(\BS{x})\,\BS{M}^{-1}(\xi^*,\BS{\beta})\,\BS{f}(\BS{x})\le \nu+1\,\,\,\forall \BS{x}\in \{0,1\}^\nu. \label{eqh}
\end{eqnarray}
The l.h.s. of (\ref{eqh}) reads as 
\begin{eqnarray*}
&&u(\BS{x},\BS{\beta})\,(\nu+1)\BS{f}^\trp(\BS{x})\,\BS{F}^{-1}\BS{U}^{-1}\bigl(\BS{F}^{-1}\bigr)^\trp  
\BS{f}(\BS{x})=\\&&(\nu+1)u(\BS{x},\BS{\beta})\Bigl(u_1^{-1}\bigl(1-{\textstyle\sum\limits_{j=1}^\nu x_j}\bigr)^2 + \sum\limits_{i=1}^\nu u_{i+1}^{-1}x_i^2\Bigr), 
\end{eqnarray*}
and hence it is obvious that (\ref{eqh}) is equivalent to (\ref{eqhhh}). 
\end{proof}

\begin{remark}
The D-optimal design under a two-factor model with support $(0,0)^\trp$, $(1,0)^\trp$, $(0,1)^\trp$ from Theorem \ref{theo3.1.2.} is covered by Theorem \ref{theo3.1.4.} for $\nu=2$ where condition (\ref{eqhhh}) is equivalent to the inequality
$u_4^{-1}\ge u_1^{-1}+u_2^{-1}+u_3^{-1}$ that is asserted in part ($\rmnum{1}$) of Theorem \ref{theo3.1.2.}. Moreover, Theorem \ref{theo3.1.4.} covers various results in the literature. For example;  \citet{10.2307/24308852} provided for the Poisson model a locally D-optimal saturated design on the continuous experimental region $[0,1]^\nu,\nu \ge 2$ that is  supported by $(0,\dots,0)^\trp$, $(1,\dots,0)^\trp$, $\dots$, $(0,\dots,1)^\trp$  at $\beta_i=-2,(1\le i \le \nu)$.
\end{remark}

 In analogy to Theorem \ref{theo3.1.4.} we introduce locally A-optimal designs in the next theorem.
\begin{theorem}\label{theo3.1.5.}  Consider the assumptions and notations of\, Theorem \ref{theo3.1.4.}. Denote $q_i=u_i^{-1/2}$\, ${(1\le i \le \nu+1)}$. Then the design $\xi^*$ which is supported by $\BS{x}_i^*\,(1\le i \le \nu+1)$ with weights
\[
\omega_1=\sqrt{\nu+1}q_1/c\,\, \mbox{ and } \omega_{i+1}^*=q_{i+1}/c,\,\, i=1,\dots, \nu,\, c=\sqrt{\nu+1}q_1+\sum\limits_{i=2}^{\nu}q_{i}
\]
 is locally A-optimal (at $\BS{\beta}$) if and only if  for all  $\BS{x}=(x_1,\ldots,x_\nu)^\trp\in\bigl\{0,1\bigr\}^\nu$  
\begin{eqnarray}
q_1^{2}\Bigl(1-\sum\limits_{j=1}^{\nu} x_j\Bigr)^2 + \sum\limits_{i=1}^{\nu} q_{i+1}^{2}x_i^2 + 
\frac{2q_1}{\sqrt{\nu+1}}\Bigl(\sum\limits_{j=1}^{\nu}x_j-1\Bigr)\sum\limits_{i=1}^{\nu}q_{i+1}x_i\,\le\,u^{-1}(\BS{x},\BS{\beta}). \label{conA}
\end{eqnarray}
\end{theorem}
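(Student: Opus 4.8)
}

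The plan is to apply the General Equivalence Theorem for $k=1$ (the A-criterion case of \eqref{eq-3.0}), exactly in the spirit of the proof of Theorem \ref{theo3.1.4.}, but now carrying the off-diagonal contributions that the A-optimal weights produce. First I would set up the same algebraic framework: with $\BS{F}=\bigl[\BS{f}(\BS{x}_1^*),\ldots,\BS{f}(\BS{x}_{\nu+1}^*)\bigr]^\trp$ having the block form \eqref{eq4.228}, and with $\BS{\Omega}={\rm diag}(\omega_i)_{i=1}^{\nu+1}$, $\BS{U}={\rm diag}(u_i)_{i=1}^{\nu+1}$, we have $\BS{M}(\xi^*,\BS{\beta})=\BS{F}^\trp\BS{\Omega}\BS{U}\BS{F}$, so that $\BS{M}^{-1}(\xi^*,\BS{\beta})=\BS{F}^{-1}(\BS{\Omega}\BS{U})^{-1}(\BS{F}^{-1})^\trp$ and $\BS{M}^{-2}(\xi^*,\BS{\beta})=\BS{F}^{-1}(\BS{\Omega}\BS{U})^{-1}(\BS{F}\BS{F}^\trp)^{-1}(\BS{\Omega}\BS{U})^{-1}(\BS{F}^{-1})^\trp$. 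The condition to verify is $u(\BS{x},\BS{\beta})\,\BS{f}^\trp(\BS{x})\,\BS{M}^{-2}(\xi^*,\BS{\beta})\,\BS{f}(\BS{x})\le {\rm tr}(\BS{M}^{-1}(\xi^*,\BS{\beta}))$ for all $\BS{x}\in\{0,1\}^\nu$, with equality at the support points $\BS{x}_1^*,\ldots,\BS{x}_{\nu+1}^*$.

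Next I would compute both sides explicitly. Because $\BS{F}^{-1}$ has the same sparse $\pm1$ structure as in \eqref{eq4.228}, the vector $\BS{g}(\BS{x}):=(\BS{F}^{-1})^\trp\BS{f}(\BS{x})$ has components $g_1=1-\sum_{j=1}^\nu x_j$ and $g_{i+1}=x_i$ for $1\le i\le\nu$; this is the same vector that produced the left side of \eqref{eqhhh}. Writing $w_i=\omega_i u_i$, the quadratic form becomes $\BS{f}^\trp\BS{M}^{-2}\BS{f}= \BS{h}^\trp (\BS{F}\BS{F}^\trp)^{-1}\BS{h}$ where $\BS{h}=(\BS{\Omega}\BS{U})^{-1}\BS{g}(\BS{x})$, i.e. $h_i=g_i/w_i$. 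One then needs $(\BS{F}\BS{F}^\trp)^{-1}$ for the specific $\BS{F}$ in \eqref{eq4.228}; a short computation (using the Sherman–Morrison identity on $\BS{F}\BS{F}^\trp = \begin{bmatrix}1 & \BS{1}^\trp\\ \BS{1} & \BS{1}\BS{1}^\trp+\BS{I}_\nu\end{bmatrix}$, or simply $(\BS{F}\BS{F}^\trp)^{-1} = \BS{F}^{-\trp}\BS{F}^{-1}$ read off from \eqref{eq4.228}) gives a matrix whose action on $\BS{h}$ reproduces, after substituting the asserted weights $w_1=\omega_1u_1=\sqrt{\nu+1}\,q_1u_1/c=\sqrt{\nu+1}/(q_1 c)$ and $w_{i+1}=q_{i+1}u_{i+1}/c=1/(q_{i+1}c)$, the left side of \eqref{conA} up to the common factor $c^2$. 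In parallel, ${\rm tr}(\BS{M}^{-1}(\xi^*,\BS{\beta}))={\rm tr}\bigl(\BS{F}^{-1}(\BS{\Omega}\BS{U})^{-1}\BS{F}^{-\trp}\bigr)$ expands to $\sum_{i} c_{ii}/w_i$ with $c_{ii}$ the diagonal of $\BS{F}^{-\trp}\BS{F}^{-1}$; for this $\BS{F}$ one gets $c_{11}=1+\nu$ (wait — more precisely $c_{11}=\nu$ from the $-\BS{1}$ column plus... I would just compute $\BS{F}^{-\trp}\BS{F}^{-1}$ directly), leading after the weight substitution to ${\rm tr}(\BS{M}^{-1})=c^2/\,$(something), and matching the normalizing constant $c=\sqrt{\nu+1}q_1+\sum_{i=2}^{\nu}q_i$ stated in the theorem. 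The equality-at-support claim is automatic since at each $\BS{x}_i^*$ the vector $\BS{g}(\BS{x}_i^*)$ is the $i$-th standard basis vector, and the weights were chosen (via Lemma \ref{lem3.0.3.} / the formula of \citet{pukelsheim2006optimal} recalled in Section \ref{sec3-1}) precisely so that the sensitivity function attains its bound there.

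The main obstacle I anticipate is purely bookkeeping rather than conceptual: correctly evaluating the quadratic form $\BS{f}^\trp(\BS{x})\BS{M}^{-2}(\xi^*,\BS{\beta})\BS{f}(\BS{x})$ and extracting the cross term $\tfrac{2q_1}{\sqrt{\nu+1}}(\sum_j x_j-1)\sum_i q_{i+1}x_i$ with the right sign and coefficient, since this requires tracking the off-diagonal entries of $(\BS{F}\BS{F}^\trp)^{-1}$ and the interaction between the $g_1$ component and the $g_{i+1}$ components. Once the left side of \eqref{conA} is identified as $c^{-2}\,\BS{f}^\trp(\BS{x})\BS{M}^{-2}(\xi^*,\BS{\beta})\BS{f}(\BS{x})$ and the right side ${\rm tr}(\BS{M}^{-1}(\xi^*,\BS{\beta}))$ is identified as $c^{-2}u^{-1}$-free constant matching $1/c^{\,?}$... the equivalence of \eqref{conA} with \eqref{eqh} for $k=1$ is immediate, and the theorem follows. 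Since the paper states "the proofs are analogous to those in \citet{GAFFKE2019}," I would present this computation compactly, referencing that source for the verification that the chosen weights indeed minimize ${\rm tr}(\BS{M}^{-1})$ over designs with the given three-point (here $(\nu{+}1)$-point) support.
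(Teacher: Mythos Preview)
Your approach is essentially the paper's: apply the Equivalence Theorem \eqref{eq-3.0} for $k=1$, write $\BS{M}^{-2}(\xi^*,\BS{\beta})$ via $\BS{F}^{-1}$ from \eqref{eq4.228} and the diagonal weight matrix, and reduce the resulting quadratic form in $\BS{g}(\BS{x})=(\BS{F}^{-1})^\trp\BS{f}(\BS{x})$ to \eqref{conA}. The paper organizes the computation slightly more cleanly by setting $\BS{C}=(\BS{F}^{-1})^\trp\BS{F}^{-1}$ (your $(\BS{F}\BS{F}^\trp)^{-1}$), reading off $c_{11}=\nu+1$ and $c_{ii}=1$ for $i\ge2$ (your first guess was right), and then using the identity $\BS{U}^{-1/2}\BS{\Omega}^{-1}=c\,\mathrm{diag}(c_{ii}^{-1/2})$ so that $\BS{M}^{-2}=c^2\,\BS{F}^{-1}\BS{U}^{-1/2}\BS{C}^*\BS{U}^{-1/2}(\BS{F}^{-1})^\trp$ with $\BS{C}^*=\mathrm{diag}(c_{ii}^{-1/2})\,\BS{C}\,\mathrm{diag}(c_{ii}^{-1/2})$; this makes ${\rm tr}(\BS{M}^{-1})=c^2$ exact (no residual ``something'') and the factor $c^2$ cancels on both sides immediately. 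One minor correction: the weight formula you invoke is the Pukelsheim $\sqrt{c_{ii}/u_i}$ rule from Section~\ref{sec3-1}, not Lemma~\ref{lem3.0.3.} (which is for the no-intercept model).
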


\begin{proof}
As in the proof of Theorem \ref{theo3.1.4.} the design matrix $\BS{F}$ and its inverse are given by (\ref{eq4.228}) and we obtain
\[
\BS{C}=\bigl(\BS{F}^{-1}\bigr)^\trp\BS{F}^{-1}=\left[\begin{array}{cc}\nu+1 & -{\bf1}_{1\times\nu}\\
-{\bf1}_{\nu\times 1} & \BS{I}_\nu\end{array}\right].
\]
This yields $\sqrt{c_{11}/u_1}=\sqrt{\nu+1}q_1$ and
$\sqrt{c_{ii}/u_i}=q_i$ for $i=2,\ldots,\nu+1$ according to Section \ref{sec3-1} with $p=\nu+1$. An elementary calculation shows that the weights given in Section \ref{sec3-1} for an A-optimal design coincide with the $\omega_i^*$ ($1\le i\le p$) as stated in the theorem. Now we show that the design $\xi^*$  is locally A-optimal  if and only if  (\ref{conA}) holds. Let $\BS{U}={\rm diag}\bigl(u_1,\ldots,u_p\bigr)$ and  $\BS{\Omega}={\rm diag}\bigl(\omega_1^*,\ldots,\omega_p^*\bigr)$ with the $p \times p$ weight matrix  $\BS{V}=\BS{\Omega}\BS{U}$. Then we have  
\begin{align*}
&\BS{M}(\xi^*,\BS{\beta})=\BS{F}^\trp\BS{V}\BS{F}=\BS{F}^\trp\BS{\Omega}\BS{U}\BS{F},\\
&{\rm tr}\bigl(\BS{M}^{-1}(\xi^*,\BS{\beta})\bigr)={\rm tr}\Bigl(\BS{F}^{-1}\BS{U}^{-1}\BS{\Omega}^{-1}(\BS{F}^{-1})^\trp \Bigr)=c\sum\limits_{i=1}^p\Bigl(\frac{c_{ii}}{u_i}\Bigr)^{1/2}=\,c^2.
\end{align*}
Since $\BS{U}^{-1/2}\BS{\Omega}^{-1}=c\,{\rm diag}\bigl(c_{11}^{-1/2},\ldots,c_{pp}^{-1/2}\bigr)$, we obtain
\[
\BS{M}^{-2}(\xi^*,\BS{\beta})=\BS{F}^{-1}\BS{U}^{-1}\BS{\Omega}^{-1}(\BS{F}^{-1})^\trp\,
\BS{F}^{-1}\BS{U}^{-1}\BS{\Omega}^{-1}(\BS{F}^{-1})^\trp= c^2\,\BS{F}^{-1}\BS{U}^{-1/2}\BS{C}^*\BS{U}^{-1/2}(\BS{F}^{-1})^\trp
\]
 where $\BS{C}^*={\rm diag}\bigl(c_{11}^{-1/2},\ldots,c_{pp}^{-1/2}\bigr)\,\BS{C}\,{\rm diag}\bigl(c_{11}^{-1/2},\ldots,c_{pp}^{-1/2}\bigr)$
\ 
So, together with condition (\ref{eq-3.0}) of The General Equivalence Theorem for $k=1$ the design $\xi^*$ is locally A-optimal (at $\BS{\beta}$) if and only if
 \begin{eqnarray}
&&\Bigl(\BS{U}^{-1/2}(\BS{F}^{-1})^\trp \BS{f}(\BS{x})\Bigr)^\trp\BS{C}^*\Bigl(\BS{U}^{-1/2}(\BS{F}^{-1})^\trp \BS{f}(\BS{x})\Bigr)\,
\le u^{-1}(\BS{x},\BS{\beta})\ \ \forall\ \BS{x}\in\{0,1\}^\nu \label{eqA}
\end{eqnarray}
 Straightforward calculation shows that condition (\ref{conA}) that provides  a characterization of local A-optimality of $\xi^*$  is equivalent to  condition (\ref{eqA}).
\end{proof}

\begin{remark}
Theorem \ref{theo3.1.5.} with $\nu=2$ covers the result stated in  case ($\rmnum{4}$)  of Theorem \ref{theo3.1.3.}. It can  be checked that,
with the notations of Theorem \ref{theo3.1.3.}, the inequality $ q_{4}^{2}\,> q_{1}^{2}+q_{2}^{2}+q_{3}^{2}+\frac{2}{\sqrt{3}}q_{1}q_{2}+\frac{2}{\sqrt{3}}q_{1}q_{3}$  is equivalent to assumption (\ref{conA}) of Theorem \ref{theo3.1.5.} for $\nu=2$.   
\end{remark}

\section{Model without intercept}\label{sec3-5}
 
 In this section we consider  GLMs (\ref{eq2.2}) having a linear predictor without intercept, i.e., the components  $f_j\neq 1$  for all  ($1\le j\le p$) and thus $f_j(\BS{0})=\BS{0}$ for all ($1\le j\le p$). Precisely, we focus on a first order model   
 \begin{equation*}
 \BS{f}(\BS{x})=(x_1,\dots,x_\nu)^\trp\,\,\mbox{ where }\,\,\,\BS{x}\in \mathcal{X}.
 \end{equation*}
Next locally optimal designs will be derived under  Kiefer $\Phi_k$-criteria and thus,  the results  implicitly cover the D-, A- and E-optimal designs.  We provide necessary and sufficient conditions for constructing  $\Phi_k$-optimal designs on a general experimental region  $\mathcal{X}$. The support points are located at the boundaries  of $\mathcal{X}$ and the optimal weights are obtained according to Lemma \ref{lem3.0.3.}.

\begin{theorem} \label{theo5.0.1}
Consider the experimental region $\mathcal{X}$. Given a vector $\BS{a}=(a_1,\dots,a_\nu)^\trp$ where $a_i\in \mathbb{R}$,\, $a_i>0\,\,(1\le i\le \nu)$. Let $\BS{x}_i^*=a_i\BS{e}_i\,\,(1\le i\le \nu)$ denote the design points that belong to $\mathcal{X}$. For a given  parameter point $\BS{\beta}$  denote $u_i=u(\BS{x}_{i}^*,\BS{\beta})$ \,$(1\le i\le \nu)$. Let $\xi_{\BS{a}}^*$ be the saturated design whose support is $\BS{x}_i^*$\, $(1\le i\le \nu)$ with the corresponding  weights 
\[
\omega_i^*=\frac{(a_i^2u_i)^\frac{-k}{k+1}}{\sum\limits_{i=1}^\nu (a_i^2u_i)^\frac{-k}{k+1}}\,\,(1\le i \le \nu). 
\]
Then $\xi_{\BS{a}}^*$ is locally $\Phi_k$-optimal (at $\BS{\beta}$)  if and only if  
\begin{eqnarray}
u(\BS{x},\BS{\beta})\sum\limits_{i=1}^{\nu}u_{i}^{-1}a_{i}^{-2}x_i^2 \leq 1\,\,\,\,\mbox{ for all}\,\,\BS{x} = (x_1, \dots, x_\nu)^\trp \in \mathcal{X}.  \label{eq3.12}
\end{eqnarray}
 \end{theorem}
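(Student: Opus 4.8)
The plan is to apply the Kiefer--Wolfowitz-type condition (\ref{eq-3.0}) for general $k$ to the saturated design $\xi_{\BS{a}}^*$ and to simplify the sensitivity function explicitly, exploiting the diagonal structure that arises because $\BS{f}(\BS{x}_i^*)=a_i\BS{e}_i$. First I would form the $\nu\times\nu$ design matrix $\BS{F}=\mathrm{diag}(a_i)_{i=1}^\nu$ and the weight matrix $\BS{V}=\mathrm{diag}(u_i\omega_i^*)_{i=1}^\nu$, exactly as in the proof of Lemma \ref{lem3.0.3.}, so that $\BS{M}(\xi_{\BS{a}}^*,\BS{\beta})=\mathrm{diag}(a_i^2u_i\omega_i^*)_{i=1}^\nu$ is diagonal, and hence every power $\BS{M}^{-k}$ and $\BS{M}^{-k-1}$ is diagonal with entries that are explicit powers of $a_i^2u_i\omega_i^*$.

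The key computational step is to substitute the stated weights $\omega_i^*=(a_i^2u_i)^{-k/(k+1)}/S$, where $S=\sum_{j=1}^\nu(a_j^2u_j)^{-k/(k+1)}$, into these diagonal entries. A short calculation gives $a_i^2u_i\omega_i^*=(a_i^2u_i)^{1/(k+1)}/S$, so that the $i$-th diagonal entry of $\BS{M}^{-k-1}$ is $\big((a_i^2u_i)^{1/(k+1)}/S\big)^{-(k+1)}=S^{k+1}(a_i^2u_i)^{-1}$, and the $i$-th diagonal entry of $\BS{M}^{-k}$ is $S^{k}(a_i^2u_i)^{-k/(k+1)}$. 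Therefore $\mathrm{tr}(\BS{M}^{-k}(\xi_{\BS{a}}^*,\BS{\beta}))=S^k\sum_{i}(a_i^2u_i)^{-k/(k+1)}=S^{k+1}$. On the other side, for $\BS{x}=(x_1,\dots,x_\nu)^\trp$ we have $\BS{f}(\BS{x})=(x_1,\dots,x_\nu)^\trp$, so $\BS{f}^\trp(\BS{x})\BS{M}^{-k-1}(\xi_{\BS{a}}^*,\BS{\beta})\BS{f}(\BS{x})=S^{k+1}\sum_{i=1}^\nu(a_i^2u_i)^{-1}x_i^2$. Plugging both sides into (\ref{eq-3.0}) and cancelling the common positive factor $S^{k+1}$ reduces the optimality condition precisely to (\ref{eq3.12}); the statement about equality at the support follows since at $\BS{x}=\BS{x}_i^*=a_i\BS{e}_i$ the left-hand side of (\ref{eq3.12}) equals $u_i\cdot u_i^{-1}a_i^{-2}a_i^2=1$.

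For the case $0<k<\infty$ the argument above is a genuine equivalence because (\ref{eq-3.0}) is itself a necessary and sufficient condition; I would also note the separate limiting cases $k=0$ (D-optimality) and $k\to\infty$ (E-optimality), where one either invokes the Kiefer--Wolfowitz theorem directly or passes to the limit in (\ref{eq3.12}), which is unchanged by the limit since the factor $S^{k+1}$ cancels before taking limits. One caveat to address is that (\ref{eq-3.0}) as stated presumes $\BS{M}(\xi_{\BS{a}}^*,\BS{\beta})$ is nonsingular; this holds here because $a_i>0$ and $u_i=u(\BS{x}_i^*,\BS{\beta})>0$ (the intensity is positive), so all diagonal entries $a_i^2u_i\omega_i^*$ are strictly positive.

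The main obstacle is not conceptual but bookkeeping: one must be careful with the exponent arithmetic $a_i^2u_i\cdot(a_i^2u_i)^{-k/(k+1)}=(a_i^2u_i)^{1-k/(k+1)}=(a_i^2u_i)^{1/(k+1)}$ and then raising to the power $-(k+1)$, since a sign or off-by-one slip there would corrupt the cancellation of $S^{k+1}$ and hence the final clean form (\ref{eq3.12}). Everything else — diagonality of $\BS{M}$, the form of $\BS{f}(\BS{x})$, positivity of the intensities — is immediate from the setup in Section \ref{sec-2} and from Lemma \ref{lem3.0.3.}.
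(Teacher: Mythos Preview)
Your proposal is correct and follows essentially the same route as the paper's own proof: diagonalize $\BS{M}(\xi_{\BS{a}}^*,\BS{\beta})$ via $\BS{F}=\mathrm{diag}(a_i)$ and $\BS{V}=\mathrm{diag}(u_i\omega_i^*)$, substitute the stated weights to get $a_i^2u_i\omega_i^*=(a_i^2u_i)^{1/(k+1)}/S$, compute $\BS{M}^{-k-1}$ and $\mathrm{tr}(\BS{M}^{-k})=S^{k+1}$, and cancel $S^{k+1}$ in condition (\ref{eq-3.0}) to obtain (\ref{eq3.12}). Your added remarks on equality at the support, nonsingularity, and the limiting cases $k=0$ and $k\to\infty$ are accurate but go slightly beyond what the paper records.
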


\begin{proof} 
Define  the $\nu\times \nu$ design matrix $\BS{F}=\mathrm{diag}(a_i)_{i=1}^\nu$ with the $\nu\times \nu$  weight matrix 
\[
\BS{V}=\mathrm{diag}(u_{i}\omega^*_i)_{i=1}^\nu= \Big(\sum\limits_{i=1}^{\nu}(a_{i}^{2}u_{i})^\frac{-k}{k+1}\Big)^{-1}\mathrm{diag}\Big((a_{i}^{-2k}u_{i})^\frac{1}{k+1}\Big)_{i=1}^{\nu}.
\]
Then we have  
\begin{align*}
&\BS{M}\bigl(\xi_{\BS{a}}^*, \BS{\beta}\bigr)=\BS{F}^\trp\BS{V}\BS{F}=  \Big(\sum\limits_{i=1}^{\nu}(a_{i}^{2}u_{i})^\frac{-k}{k+1}\Big)^{-1}\mathrm{diag}\Big((a_{i}^{2}u_{i})^\frac{1}{k+1}\Big)_{i=1}^{\nu},\\
&\BS{M}^{-k-1}\bigl(\xi_{\BS{a}}^*, \BS{\beta}\bigr)=\Big(\sum\limits_{i=1}^{\nu}(a_{i}^{2}u_{i})^\frac{-k}{k+1}\Big)^{k+1}\mathrm{diag}\Big(a_{i}^{-2}u_{i}^{-1}\Big)_{i=1}^{\nu}, \mbox{ and }\\
&\mathrm{tr}\Big(\BS{M}^{-k}\bigl(\xi_{\BS{a}}^*, \BS{\beta}\bigr)\Big)=\Big (\sum\limits_{i=1}^{\nu}(a_{i}^{2}u_{i})^\frac{-k}{k+1}\Big)^{k+1}.
\end{align*}
Adopting these formulas simplifies the l.h.s. of condition (\ref{eq-3.0}) of The Equivalence Theorem to $u(\BS{x},\BS{\beta})\Big(\sum\limits_{i=1}^{\nu}(a_{i}^{2}u_{i})^\frac{-k}{k+1}\Big)^{k+1}\sum\limits_{i=1}^{\nu}u_{i}^{-1}a_{i}^{-2}x_i^2$ which is hence, bounded by $\Big(\sum\limits_{i=1}^{\nu}(a_{i}^{2}u_{i})^\frac{-k}{k+1}\Big)^{k+1}$ if and only if  condition (\ref{eq3.12}) holds true.
 \end{proof}
 
 In particular,  Theorem \ref{theo5.0.1} states that for a given parameter point  $\BS{\beta}$ the locally D-optimal design ($k=0$)  has  wights $\omega_i^*=1/\nu\,\, (1\le i\le \nu)$ and the  locally A-optimal design ($k=1$) has weights $\omega_i^*=\frac{(a_i^2u_{i})^{-1/2}}{\sum\limits_{i=1}^{\nu}(a_i^2u_{i})^{-1/2}}$\,\, $(1\le i\le \nu)$ whereas  the  locally E-optimal design ($k\rightarrow \infty$) has weights $\omega_i^*=\frac{(a_i^2u_{i})^{-1}}{\sum\limits_{i=1}^{\nu}(a_i^2u_{i})^{-1}}$\,\, $(1\le i\le \nu)$. \par
 
Theorem \ref{theo5.0.1} might be applicable for a wide class of GLMs on appropriate experimental regions. Consider a non-intercept gamma model, i.e., $\BS{x}^\trp\BS{\beta}=1/\mu$ with $\mu>0$ and  intensity $u(\BS{x},\BS{\beta})=(\BS{x}^\trp\BS{\beta})^{-2}$. Let the experimental region is given by $\mathcal{X}=[0,\infty)\setminus\{\BS{0}\}$. Due to  the positivity assumption of gamma models, i.e., $\mu>0$ the parameter point $\BS{\beta}$ must satisfy the condition  $\BS{x}^\trp\BS{\beta}>0$ for all $\BS{x}\in \mathcal{X}$. Therefore, the parameter space is determined by $\BS{\beta}\in (0,\infty)^\nu$, i.e.,  $\beta_i>0$ for all ($1 \le i \le \nu$). The next corollary is immediate.

\begin{corollary} \label{theo5.0.2.}
Consider  a non-intercept gamma model  with $\BS{f}(\BS{x})=\BS{x}$ on the experimental region ${\mathcal{X}=[0,\infty)^\nu\setminus\{\BS{0}\}}$ and intensity $u(\BS{x},\BS{\beta})=(\BS{x}^\trp\BS{\beta})^{-2}$. Given a vector $\BS{a}=(a_1,\dots,a_\nu)^\trp$ where $a_i\in \mathbb{R}$,\, $a_i>0\,\,(1\le i\le \nu)$. Let ${\BS{x}_i^*=a_i\BS{e}_i}$\, for all $i=1,\dots, \nu$ denote the design points belong to $\mathcal{X}$. For a given parameter point $\BS{\beta}\in (0,\infty)^\nu$ let $\xi_{\BS{a}}^*$ be the saturated design whose support is $\BS{x}_i^*$\, $(1\le i\le \nu)$ with the corresponding weights 
\[
\omega_i^*=\frac{\beta_i^\frac{2k}{k+1}}{\sum\limits_{i=1}^\nu \beta_i^\frac{2k}{k+1}}\,\,\,(1\le i \le \nu).
\]
Then $\xi_{\BS{a}}^*$ is locally $\Phi_k$-optimal (at $\BS{\beta}$).
\end{corollary}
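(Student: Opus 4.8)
The plan is to derive Corollary~\ref{theo5.0.2.} as a direct specialization of Theorem~\ref{theo5.0.1} to the non-intercept gamma model, so the only real work is (a) rewriting the optimal weights in the stated form and (b) verifying that the sufficiency condition~(\ref{eq3.12}) is automatically satisfied on $\mathcal{X}=[0,\infty)^\nu\setminus\{\BS{0}\}$ for every $\BS{\beta}\in(0,\infty)^\nu$. First I would substitute the gamma intensity $u(\BS{x},\BS{\beta})=(\BS{x}^\trp\BS{\beta})^{-2}$ into the design points $\BS{x}_i^*=a_i\BS{e}_i$, obtaining $u_i=u(\BS{x}_i^*,\BS{\beta})=(a_i\beta_i)^{-2}$, hence $a_i^2u_i=\beta_i^{-2}$. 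Plugging this into the weight formula of Theorem~\ref{theo5.0.1} gives $\omega_i^*=(\beta_i^{-2})^{-k/(k+1)}/\sum_j(\beta_j^{-2})^{-k/(k+1)}=\beta_i^{2k/(k+1)}/\sum_j\beta_j^{2k/(k+1)}$, which is exactly the expression in the corollary. (I should also note that for $\BS{\beta}\in(0,\infty)^\nu$ the vectors $\BS{f}(\BS{x}_i^*)=a_i\BS{e}_i$ are linearly independent and the $\BS{x}_i^*$ lie in $\mathcal{X}$, so the hypotheses of Theorem~\ref{theo5.0.1} are met.)

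Next I would check the equivalence-theorem inequality~(\ref{eq3.12}). Using $u_i^{-1}a_i^{-2}=\beta_i^2$ and $u(\BS{x},\BS{\beta})=(\BS{x}^\trp\BS{\beta})^{-2}=(\sum_i\beta_ix_i)^{-2}$, the left-hand side of~(\ref{eq3.12}) becomes
\[
\frac{\sum_{i=1}^\nu \beta_i^2 x_i^2}{\bigl(\sum_{i=1}^\nu \beta_i x_i\bigr)^2},
\]
so the condition to verify is $\sum_i\beta_i^2x_i^2\le(\sum_i\beta_ix_i)^2$ for all $\BS{x}\in\mathcal{X}$. Since $\beta_i>0$ and $x_i\ge0$ with not all $x_i$ zero, the denominator $\sum_i\beta_ix_i$ is positive, and expanding the square gives $(\sum_i\beta_ix_i)^2=\sum_i\beta_i^2x_i^2+2\sum_{i<j}\beta_i\beta_jx_ix_j\ge\sum_i\beta_i^2x_i^2$, because every cross term $\beta_i\beta_jx_ix_j$ is nonnegative. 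Thus~(\ref{eq3.12}) holds for every $\BS{x}\in\mathcal{X}$, and Theorem~\ref{theo5.0.1} yields the local $\Phi_k$-optimality of $\xi_{\BS{a}}^*$.

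There is essentially no obstacle here; the corollary is a clean instantiation, and the single substantive inequality is an elementary consequence of nonnegativity of the coordinates and positivity of $\BS{\beta}$. The only point deserving a word of care is the boundary of the experimental region: points $\BS{x}$ with some coordinates equal to $0$ are allowed, but as long as $\BS{x}\neq\BS{0}$ the quantity $\sum_i\beta_ix_i$ remains strictly positive (all $\beta_i>0$), so $u(\BS{x},\BS{\beta})$ is well defined and finite and the argument goes through unchanged. Hence the proof reduces to the two displayed computations above plus this remark.
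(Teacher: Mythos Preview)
Your proof is correct and follows essentially the same route as the paper: apply Theorem~\ref{theo5.0.1} with $u_i=(a_i\beta_i)^{-2}$, so that $a_i^2u_i=\beta_i^{-2}$ yields the stated weights, and then verify condition~(\ref{eq3.12}) by observing that it reduces to $\sum_i\beta_i^2x_i^2\le(\sum_i\beta_ix_i)^2$, i.e.\ $-2\sum_{i<j}\beta_i\beta_jx_ix_j\le0$, which holds by nonnegativity of all terms. Your version is in fact slightly more careful than the paper's in explicitly deriving the weight formula and addressing well-definedness of $u(\BS{x},\BS{\beta})$ on the boundary.
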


\begin{proof} 
The corollary covers the result of Theorem \ref{theo5.0.1} under a gamma model.  For a given $\BS{\beta} \in (0,\infty)^\nu$ let $u_i=u(\BS{x}_i^*,\BS{\beta})\,\,(1\le i \le \nu)$. Thus $u_i=(a_i\beta_i)^{-2}\,\,(1\le i \le \nu)$.  Then  condition (\ref{eq3.12}) of Theorem \ref{theo5.0.1} is equivalent to $-2\sum\limits_{i<j=1}^{\nu}\beta_i\beta_jx_ix_j\le 0$ for all $\BS{x} \in \mathcal{X}$. Since ${\beta_i>0, x_i>0\,\,\, (1\le i \le \nu)}$ the condition holds  true for any $\BS{x} \in \mathcal{X}$ at any given $\BS{\beta} \in (0,\infty)^\nu$. 
 \end{proof}

Corollary \ref{theo5.0.2.} covers Theorem 3.1 in \citet{2019arXiv190409232I} which provided locally D- and A-optimal designs for non-intercept gamma models. For a Poisson model, i.e.,  $\BS{x}^\trp\BS{\beta}=\log (\mu)$ with intensity  $u(\BS{x},\BS{\beta})=\exp\big(\BS{x}^\trp\BS{\beta}\big)$ and experimental region $\mathcal{X}=\{0,1\}^\nu, \nu\ge 2$  let us restrict to the case of  $a_i=1\,\,(1 \le i \le \nu)$, i.e., the design points are the unit vectors $\BS{e}_i$\,$(1\le i \le \nu)$. As a result, condition (\ref{eq3.12}) is simplified as presented in the following corollary. 

\begin{corollary} \label{cor3.2.1.}
Consider a non-intercept Poisson model with $\BS{f}(\BS{x})=\BS{x}$ on the experimental region $\mathcal{X}=\{0,1\}^\nu,\,\,\nu\ge2$ and  intensity $u(\BS{x},\BS{\beta})=\exp\big(\BS{x}^\trp\BS{\beta}\big)$.  For a given parameter point $\BS{\beta}=(\beta_1,\dots,\beta_\nu)^\trp$ define $\lambda_i=\exp(\beta_i)\,\,(1\le i\le \nu)$ and denote by $\lambda_{[1]}\ge\lambda_{[2]}\ge\dots\ge\lambda_{[\nu]}$ the descending order of $\lambda_{1},\lambda_{2},\dots,\lambda_{\nu}$. Let $\xi_{\BS{a}}^*$ be the saturated design  supported by the unit vectors $\BS{e}_i$\,$(1\le i \le \nu)$  with weights $\omega_i^*=\frac{\lambda_i^\frac{-k}{k+1}}{\sum\limits_{i=1}^\nu \lambda_i^\frac{-k}{k+1}}\,\,(1\le i \le \nu)$. Then $\xi_{\BS{a}}^*$ is locally $\Phi_k$-optimal (at $\BS{\beta}$) if and only if  
\begin{equation}
\lambda_{[1]}+\lambda_{[2]}\le1. \label{eq3.13}
\end{equation}
\end{corollary}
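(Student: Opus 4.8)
The plan is to apply Theorem~\ref{theo5.0.1} directly to the non-intercept Poisson model and simplify the general optimality condition~\eqref{eq3.12} to the compact inequality~\eqref{eq3.13}. First I would specialize the setup: with $\BS{f}(\BS{x})=\BS{x}$, $a_i=1$ for all $i$, and intensity $u(\BS{x},\BS{\beta})=\exp(\BS{x}^\trp\BS{\beta})$, the design points are the unit vectors $\BS{e}_i$ and the intensities at the support are $u_i=u(\BS{e}_i,\BS{\beta})=\exp(\beta_i)=\lambda_i$. The optimal weights from Theorem~\ref{theo5.0.1} then become $\omega_i^*=(a_i^2u_i)^{-k/(k+1)}/\sum_j(a_j^2u_j)^{-k/(k+1)}=\lambda_i^{-k/(k+1)}/\sum_j\lambda_j^{-k/(k+1)}$, matching the statement. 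So the only substantive task is to rewrite condition~\eqref{eq3.12} in the present setting.

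Next I would carry out that rewriting. Since $u_i^{-1}a_i^{-2}=\lambda_i^{-1}$ and $x_i\in\{0,1\}$ so that $x_i^2=x_i$, the left-hand side of~\eqref{eq3.12} at a point $\BS{x}=(x_1,\dots,x_\nu)^\trp\in\{0,1\}^\nu$ reads
\[
\exp\Bigl(\textstyle\sum_{i=1}^\nu\beta_ix_i\Bigr)\sum_{i=1}^\nu\lambda_i^{-1}x_i
=\prod_{i:\,x_i=1}\lambda_i\;\cdot\;\sum_{i:\,x_i=1}\lambda_i^{-1}.
\]
Thus for a nonempty index set $S=\{i:x_i=1\}$ the condition~\eqref{eq3.12} is $\bigl(\prod_{i\in S}\lambda_i\bigr)\bigl(\sum_{i\in S}\lambda_i^{-1}\bigr)\le 1$, and for $S=\emptyset$ the left-hand side is $0\le 1$, which is automatic. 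I would then observe that $\bigl(\prod_{i\in S}\lambda_i\bigr)\bigl(\sum_{i\in S}\lambda_i^{-1}\bigr)=\sum_{j\in S}\prod_{i\in S\setminus\{j\}}\lambda_i$, a sum of products of $|S|-1$ of the $\lambda$'s.

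The main point is then a monotonicity/reduction argument showing that, among all nonempty $S$, the binding constraint is $S=\{[1],[2]\}$, i.e.\ the two largest $\lambda$'s. I would argue this in two steps. First, for $|S|=1$ the constraint is $\prod_{i\in S\setminus\{j\}}\lambda_i=$ (empty product) $=1\le 1$, always satisfied. Second, for $|S|\ge 2$ I claim the left-hand side is nondecreasing under enlarging $S$ and is maximized over $2$-element sets by $\{[1],[2]\}$: if $S\subseteq S'$ with $|S|\ge 2$, then since all $\lambda_i>0$ each term $\prod_{i\in S\setminus\{j\}}\lambda_i$ is dominated by $\prod_{i\in S'\setminus\{j\}}\lambda_i$ (we multiply by the extra $\lambda$'s, each $\ge$\dots here I must be careful: the $\lambda_i$ need not be $\ge 1$). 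So the clean monotonicity in $|S|$ is not automatic, and the honest route is: for any $S$ with $|S|\ge 2$, pick the two indices $j_1,j_2\in S$ with the two largest $\lambda$-values in $S$; then $\sum_{j\in S}\prod_{i\in S\setminus\{j\}}\lambda_i\ge \prod_{i\in S\setminus\{j_1\}}\lambda_i+\prod_{i\in S\setminus\{j_2\}}\lambda_i=\bigl(\prod_{i\in S\setminus\{j_1,j_2\}}\lambda_i\bigr)(\lambda_{j_1}+\lambda_{j_2})$, which need not exceed $\lambda_{[1]}+\lambda_{[2]}$ either. The correct and simplest observation is instead that the \emph{necessary} part comes from testing only the $2$-point design $\BS{x}=\BS{e}_{[1]}+\BS{e}_{[2]}$, giving $\lambda_{[1]}+\lambda_{[2]}\le 1$; and the \emph{sufficient} direction requires showing $\lambda_{[1]}+\lambda_{[2]}\le 1$ forces $\bigl(\prod_{i\in S}\lambda_i\bigr)\bigl(\sum_{i\in S}\lambda_i^{-1}\bigr)\le 1$ for every $S$.

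For the sufficiency I would proceed by induction on $|S|$, using $\lambda_{[1]}+\lambda_{[2]}\le 1$ to deduce $\lambda_i\le 1$ for every $i$ except possibly $\lambda_{[1]}$, and in fact $\lambda_{[2]}\le 1/2$, hence $\lambda_i\le 1/2$ for all $i\ge[2]$; combined with $\lambda_{[1]}\le 1-\lambda_{[2]}\le 1$ one gets all $\lambda_i\le 1$. Then for $|S|\ge 2$, writing $g(S)=\bigl(\prod_{i\in S}\lambda_i\bigr)\bigl(\sum_{i\in S}\lambda_i^{-1}\bigr)$ and removing any index $k\in S$ with $\lambda_k\le 1$ (which exists), one checks $g(S)=\lambda_k\,g(S\setminus\{k\})+\prod_{i\in S\setminus\{k\}}\lambda_i$. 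Bounding $\prod_{i\in S\setminus\{k\}}\lambda_i\le \lambda_{[1]}$ when $|S\setminus\{k\}|\ge 1$ wait — that product over $|S|-1\ge 1$ factors, all $\le 1$ except one $\le\lambda_{[1]}$, is $\le\lambda_{[1]}$ only if the remaining factors are $\le 1$, which holds. So $g(S)\le\lambda_k g(S\setminus\{k\})+\lambda_{[1]}$; iterating down to $|S|=2$, where $g(\{i,j\})=\lambda_i+\lambda_j\le\lambda_{[1]}+\lambda_{[2]}\le 1$, and using $\lambda_k\le 1$ at each step together with $\lambda_{[2]}\le 1/2$ to control the accumulated $\lambda_{[1]}$ terms, one obtains $g(S)\le 1$. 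I expect the bookkeeping in this induction — making the telescoping bound close exactly at $1$ rather than something slightly larger — to be the main obstacle, and I would handle it by carefully separating the unique possibly-large index $[1]$ from the rest (all of which are $\le 1/2$) so that $g(S)\le\lambda_{[1]}+\lambda_{[2]}\cdot(\text{geometric tail})\le\lambda_{[1]}+\lambda_{[2]}\le 1$. The necessity direction is immediate by evaluating~\eqref{eq3.12} at $\BS{x}=\BS{e}_{[1]}+\BS{e}_{[2]}$, which gives exactly $\lambda_{[1]}+\lambda_{[2]}\le 1$.
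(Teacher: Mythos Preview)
Your reduction to Theorem~\ref{theo5.0.1}, the rewriting of condition~\eqref{eq3.12} as
\[
\prod_{i\in S}\lambda_i\sum_{i\in S}\lambda_i^{-1}=\sum_{j\in S}\prod_{i\in S\setminus\{j\}}\lambda_i\le 1\quad\mbox{for every }S\subseteq\{1,\dots,\nu\},
\]
the trivial cases $|S|\le 1$, and the necessity argument (test $S=\{[1],[2]\}$) all coincide with the paper's proof.

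The difference is in the sufficiency induction, and this is exactly where your sketch becomes uncertain. The paper first observes that, for each fixed size $s\ge2$, the maximum of $\sum_{j\in S}\prod_{i\in S\setminus\{j\}}\lambda_i$ over all $S$ with $|S|=s$ is attained at $S=\{[1],\dots,[s]\}$; call this maximum $T_s$. The task then reduces to $T_s\le 1$ for $2\le s\le\nu$, and the paper's induction step is the clean multiplicative recursion
\[
T_{q+1}=T_q\Bigl(\lambda_{[q+1]}+\bigl(\textstyle\sum_{i=1}^{q}\lambda_{[i]}^{-1}\bigr)^{-1}\Bigr)\le T_q\Bigl(\lambda_{[q+1]}+\frac{1}{q}\lambda_{[1]}\Bigr)\le T_q\,T_2\le 1,
\]
using only $\lambda_{[i]}^{-1}\ge\lambda_{[1]}^{-1}$ and $\lambda_{[q+1]}\le\lambda_{[2]}$. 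This eliminates precisely the bookkeeping you flag as the main obstacle.

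Your direct induction on general $S$, via $g(S)=\lambda_k\,g(S\setminus\{k\})+\prod_{i\in S\setminus\{k\}}\lambda_i$ with $\lambda_k\le 1/2$ for $k\neq[1]$, can in fact be completed, but the final bound you write, $g(S)\le\lambda_{[1]}+\lambda_{[2]}\cdot(\mbox{geometric tail})$, is not the right shape: the accumulated product terms do not telescope into a single $\lambda_{[2]}$-geometric series. If you insist on this route, the cleanest closure is to separate off $[1]$ once, writing $g(S)=\prod_{i\in S'}\lambda_i+\lambda_{[1]}\,g(S')$ with $S'=S\setminus\{[1]\}$, bound every factor in $S'$ by $\lambda_{[2]}$, and then verify the elementary inequality $\lambda_{[2]}^{m}+m\lambda_{[1]}\lambda_{[2]}^{m-1}\le 1$ for $m=|S'|\ge1$ from $\lambda_{[1]}+\lambda_{[2]}\le1$ and $\lambda_{[2]}\le1/2$. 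This works, but the paper's $T_{q+1}\le T_q\,T_2$ argument is shorter, sharper, and avoids the case analysis entirely.
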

\begin{proof} The corollary covers the result of Theorem \ref{theo5.0.1} under a Poisson model with  intensity $u(\BS{x},\BS{\beta})=\exp\big(\BS{x}^\trp\BS{\beta}\big)$ and $a_i=1\,\,(1 \le i \le \nu)$. So condition (\ref{eq3.12}) reduces to 
 \begin{equation}
   \exp(\sum\limits_{i=1}^{\nu}\beta_ix_i)\sum\limits_{i=1}^{\nu}\exp(-\beta_i)x_i^2\le1\,\,\, \forall \BS{x}\in \mathcal{X}. \label{eq3.14}
\end{equation}
For any $\BS{x}=(x_1,\dots,x_\nu) \in \{0,1\}^\nu,\nu\ge 2$ define the index set  $S\subseteq\{1,\ldots,\nu\}$   such that 
  $x_i=1$\, if \,$i\in S$\, and \,$x_i=0$ else.  So for \,$\BS{x}$\, described by \,$S\subseteq\{1,\ldots,\nu\}$\, and\, $s=\#S$,\, if  $s=0$ (i.e., $S=\emptyset$)  then the l.h.s. of (\ref{eq3.14}) is zero. If\,$s=1$, \,inequality  (\ref{eq3.14})  becomes an equality.  However,  the l.h.s. of (\ref{eq3.14})  is equal to   $\exp(\sum\limits_{i\in S}\beta_i)\sum\limits_{i\in S}\exp(-\beta_i)$ which thus rewrites  as $ \prod\limits_{i\in S}\lambda_i\sum\limits_{i\in S}\lambda_i^{-1}$ or equivalently  as $\sum\limits_{i\in S}\prod\limits_{j\in S\setminus\{i\}}\lambda_j$. By the  the descending order $\lambda_{[1]}\ge\lambda_{[2]}\ge\dots\ge\lambda_{[\nu]}$ of  $\lambda_{1},\lambda_{2},\dots,\lambda_{\nu}$ we obtain  for all subsets $S\subseteq\{1,\ldots,\nu\}$ of same sizes $s\ge 2$, 
  \[
  \sum\limits_{i=1}^{s}\lambda_{[i]}^{-1}\prod\limits_{i=1}^{s}\lambda_{[i]} =\sum\limits_{i=1}^s\prod\limits_{ i\neq j=1}^{s}\lambda_{[j]}\ge \sum\limits_{i\in S}\prod\limits_{j\in S\setminus\{i\}}\lambda_j.
  \]
    Denote $T_s=\sum\limits_{i=1}^{s}\lambda_{[i]}^{-1}\prod\limits_{i=1}^{s}\lambda_{[i]}$. Hence, inequality (\ref{eq3.14}) is equivalent to $T_s \le 1$ for all $s=2,\dots,\nu$. Then it is sufficient to show that 
\begin{align*}
\lambda_{[1]}+\lambda_{[2]}&\le1\,\, \iff T_s \le 1\,\,\, \forall s=2,\dots,\nu.
\end{align*}
For ``$\Longleftarrow$'', let $s=2$ then $T_2=\lambda_{[1]}+\lambda_{[2]}\le 1$. For ``$\Longrightarrow$'', firstly, note that $T_2=\lambda_{[1]}+\lambda_{[2]}$ thus $T_s\le 1$ is true for $s=2$.  Now assume $T_s\le 1$ is true  for some $s=q<\nu$, i.e.,  $T_q\le 1$ and we want to show that it is true for $s=q+1$.
We can write 
\begin{align*}
T_{q+1}&=\Big(\sum\limits_{i=1}^{q}\lambda_{[i]}^{-1}+\lambda_{[q+1]}^{-1}\Big)\Big(\prod\limits_{i=1}^{q}\lambda_{[i]}\Big)\lambda_{[q+1]}\\
&=T_q\lambda_{[q+1]}+\prod\limits_{i=1}^{q}\lambda_{[i]}=T_q\lambda_{[q+1]}+T_q\Big(\sum\limits_{i=1}^{q}\lambda_{[i]}^{-1}\Big)^{-1}=T_q\Big(\lambda_{[q+1]}+\Big(\sum\limits_{i=1}^{q}\lambda_{[i]}^{-1}\Big)^{-1}\Big)\\
&\mbox{ since } \Big(\sum\limits_{i=1}^{q}\lambda_{[i]}^{-1}\Big)^{-1} \le \frac{1}{q}\lambda_{[1]} \mbox{ and } \lambda_{[q+1]}+\frac{1}{q}\lambda_{[1]}\le T_2=\lambda_{[1]}+\lambda_{[2]}\le 1 \mbox{ we have } \\
T_{q+1}&\le T_q\Big(\lambda_{[q+1]}+\frac{1}{q}\lambda_{[1]}\Big)\le T_q T_2\le 1.
\end{align*}
\end{proof}

\begin{remark}
One can slightly highlight on $\Phi_k$-optimality  under a non-intercept linear model with ${\BS{f}(\BS{x})=(x_1,\dots,x_\nu)^\trp}$ on the continuous experimental region ${\mathcal{X}=[0,1]^\nu,\,\nu\ge 2}$. Here, $u(\BS{x},\BS{\beta})=1$ for all $\BS{x}\in \mathcal{X}$ so the information matrices in a linear model are independent of $\BS{\beta}$. Note that Theorem \ref{theo5.0.1} does not cover a non-intercept linear model on $\mathcal{X}$ since condition (\ref{eq3.12}) does not hold true for $\nu\ge 2$. However,  the l.h.s. of  condition (\ref{eq-3.0}) of The Equivalence Theorem  under linear models, i.e. $u(\BS{x},\BS{\beta})=1$, is strictly convex and of course it attains its maximum at some vertices of  $\mathcal{X}$. Thus the support  of any $\Phi_k$(or D, A, E)-optimal design is a subset of $\{0,1\}^\nu$. As a result, in particular for D- and A-optimality,  one might apply the  results of Theorem 3.1 in \citet{doi:10.1080/02331888808802124} which were obtained under  linear models on $\{0,1\}^\nu$.
\begin{itemize}
\item  
 For odd numbers of factors $\nu=2q+1,\,\,q\in \mathbb{N}$, the equally weighted designs $\xi^*$  supported by all $\BS{x}^*=(x_1,\dots,x_\nu)\in \{0,1\}^\nu$ such that $\sum\limits_{i=1}^{\nu}x_i=q+1$ is either  D- or A-optimal.
\item  
 For even numbers of  factors $\nu=2q,\,\,q\in \mathbb{N}$, the equally weighted design $\xi^*$  supported by all $\BS{x}^*=(x_1,\dots,x_\nu)\in \{0,1\}^\nu$ such that $\sum\limits_{i=1}^{\nu}x_i=q$ or  $\sum\limits_{i=1}^{\nu}x_i=q+1$ is D-optimal. Moreover, the design $\xi^*$ which assigns equal weights to all points  $\BS{x}^*=(x_1,\dots,x_\nu)\in \{0,1\}^\nu$  such that $\sum\limits_{i=1}^{\nu}x_i=q$ is A-optimal. 
 \end{itemize}
\end{remark}

\end{document}